\theoremstyle{plain}
\newtheorem{teo}{Theorem}[section]
\newtheorem{thm}[teo]{Theorem}
\newtheorem{lem}[teo]{Lemma}
\newtheorem{remark}[teo]{Remark}
\newtheorem{pro}[teo]{Proposition}
\def\M{\text{\rm M}}
\author{M\"{u}nevver P{\i}nar Ero\u{g}lu$^\dag$, Tsiu-Kwen Lee$^\flat$ and Jheng-Huei Lin$^\natural$}
\title{Certain functional identities on division rings of characteristic two}
\date{}
\begin{document}

\maketitle

\centerline {Department of Mathematics, Faculty of Science,$^\dag$}

\centerline {Dokuz Eyl\"{u}l University, \.{I}zmir, T\"{u}rkiye}

\centerline {mpinar.eroglu@deu.edu.tr$^\dag$}

\centerline {and}

\centerline {Department of Mathematics, National Taiwan
University${^\flat,}$${^\natural}$}

\centerline {Taipei, Taiwan}

\centerline {tklee@math.ntu.edu.tw$^\flat$; r01221012@ntu.edu.tw${^\natural}$}

\begin{abstract}\vskip4pt
\noindent Let $D$ be a noncommutative division ring. In a recent paper, Lee and Lin  proved that if $\text{\rm char}\, D\ne 2$, the only solution of additive maps $f, g$ on $D$
satisfying the identity $f(x) = x^n g(x^{-1})$ on $D\setminus \{0\}$ with $n\ne 2$ a positive integer is the trivial case, that is, $f=0$ and $g=0$.
Applying Hua's identity and the theory of functional and generalized polynomial identities, we give a complete solution of the same identity for any nonnegative integer $n$ if
$\text{\rm char}\, D=2$.
\end{abstract}

{ \hfill\break \noindent 2020 {\it Mathematics Subject Classification.}\ \  16R60, 16R50, 16K40.   \vskip4pt


\noindent {\it Key words and phrases:}\ \ Division ring, additive map, elementary operator, Hua's identity, functional identity, PI-algebra, GPI-algebra. \vskip4pt

\noindent Corresponding author:\ Jheng-Huei Lin \vskip12pt

\section{Introduction}

Throughout, rings or algebras are associative with unity. Given a ring $R$, we denote by $R^\times$ the set of all units of $R$ and by $Z(R)$ the center of $R$.
The study of the identity $f(x) = x^n g(x^{-1})$ can be traced back to Halperin's question in 1963 of whether an additive map $f \colon \mathbb{R} \rightarrow \mathbb{R}$ satisfying the identity $f(x)=x^2 f(x^{-1})$ is continuous or not (see \cite{aczel1964}).
Later, Kurepa \cite{kurepa1964} studied the identity $f(t) = P(t)g(t^{-1})$, where $f,g \colon \mathbb{R} \rightarrow \mathbb{R}$ are nonzero additive maps and $P \colon \mathbb{R} \rightarrow \mathbb{R}$ is continuous with $P(1)=1$.
Several years later, Ng \cite{ng1987} completely characterized the identity $F(x) + M(x)G(1/x)=0$, where $F,G$ are additive and $M$ is multiplicative, on a field of characteristic not $2$.
In the same year, Vukman \cite{vukman1987} determined the identity $f(x)=-x^2 f(x^{-1})$ on a noncommutative division ring of characteristic not $2$.
In 2023, Dar and Jing \cite{dar2023} gave a complete characterization of the identity $f(x)=-x^2 g(x^{-1})$ for additive maps on noncommutative division rings of characteristic not $2$ and on matrix rings $\M_n(D)$, $n > 1$, over a noncommutative division ring $D$ with ${\text{\rm char}}\,D \ne 2,3$.
We state their result on noncommutative division rings as follows.

\begin{thm}(\cite[Theorem 1.4]{dar2023})
 Let $D$ be a division ring, which is not a field, with characteristic
 different from $2$ and let $f,  g\colon D\to D$ be additive maps satisfying the identity
 $f(x)+x^2g(x^{-1}) = 0$ for all $x\in D^{\times}$. Then $f(x) = xq$ and $g(x) =-xq$ for all $x\in D$,
 where $q\in D$ is a fixed element.
\end{thm}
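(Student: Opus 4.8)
The plan is to determine the fixed element $q$, reduce to a homogeneous form of the identity, then use Hua's identity to merge the two unknown maps into one. \emph{Step 1 (normalisation).} I would set $q:=f(1)$. Putting $x=1$ in $f(x)+x^2g(x^{-1})=0$ gives $f(1)+g(1)=0$, so $g(1)=-q$. Replacing $f$ and $g$ by the additive maps $\tilde f(x):=f(x)-xq$ and $\tilde g(x):=g(x)+xq$, one checks that $\tilde f(1)=\tilde g(1)=0$ and that the pair still satisfies $\tilde f(x)+x^2\tilde g(x^{-1})=0$ on $D^\times$, the correction terms cancelling because $x^2\cdot x^{-1}q=xq$. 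Thus it suffices to show that the only additive solution with $f(1)=g(1)=0$ is $f=g=0$; undoing the normalisation then yields $f(x)=xq$ and $g(x)=-xq$.

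\emph{Step 2 (collapsing $f$ and $g$).} Replacing $x$ by $x^{-1}$ gives the companion relation $g(x)=-x^2f(x^{-1})$. I would then substitute $x\mapsto 1+x$ and expand by additivity using $f(1)=g(1)=0$, treating the inverse $(1+x)^{-1}$ through the Hua-type relation
\[
(1+x)^{-1}+(1+x^{-1})^{-1}=1 .
\]
This rewrites $g((1+x)^{-1})$ in terms of $g(x)$. Since $1+x$, $x$, and $(1+x^{-1})^{-1}=x(1+x)^{-1}$ all lie in the commutative subfield generated by $x$, the scalar factor that appears, namely $(1+x)^2(1+x^{-1})^{-2}x^{-2}$, collapses to $1$, so I obtain $f(x)=g(x)$ for all $x\neq 0,-1$; the two exceptional points follow from additivity. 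Hence $f=g=:h$.

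\emph{Step 3 (the single equation).} Now $h$ is additive, $h(1)=0$, and $h(x)=-x^2h(x^{-1})$ for all $x\in D^\times$; this is exactly Vukman's equation, whose only solution on a noncommutative division ring of characteristic $\neq 2$ is $h=0$, and invoking that determination completes the proof. For a self-contained treatment I would note that no single-variable substitution helps: the step $x\mapsto 1+x$ used above becomes tautological here, precisely because $x$ and $1+x$ commute. The genuine tool is Hua's identity with two independent variables,
\[
aba=a-\bigl(a^{-1}+(b^{-1}-a)^{-1}\bigr)^{-1},
\]
together with its companion $(a+b)^{-1}=a^{-1}-(a+ab^{-1}a)^{-1}$; feeding each inverse through $h(u^{-1})=-u^{-2}h(u)$ produces a nontrivial relation among $h(a)$, $h(b)$, and $h(a+ab^{-1}a)=h(a)+h(ab^{-1}a)$.

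The hard part will be Step 3: controlling the mixed term $h(ab^{-1}a)$. The strategy would be to combine the relation just obtained with its image under $a\leftrightarrow b$ and with specialisations of $b$, so as to force the defect $d(x):=x^{-1}h(x)$, which satisfies $d(x^{-1})=-d(x)$ and $d(1)=0$, to vanish identically. This is the point at which the noncommutativity of $D$ is indispensable, and it is the elementary cancellation available here that the functional-identity and GPI machinery will have to replace in the characteristic-two analysis.
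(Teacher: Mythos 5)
The paper quotes this result from Dar and Jing without giving a proof, so there is no in-paper argument to compare against; judged on its own, your proposal is correct. The normalisation in Step 1 works because $x^2\cdot x^{-1}q=xq$ cancels the correction terms, and the Step 2 computation checks out: writing $y=(1+x^{-1})^{-1}=x(1+x)^{-1}$ and using $(1+x)^{-1}=1-y$ together with the companion relation $g(y)=-y^{2}f(y^{-1})$ and $f(x^{-1})=-x^{-2}g(x)$, one finds $f(x)=(1+x)^{2}g(y)$ and $g(y)=(1+x)^{-2}g(x)$, whence $f=g$ on $D\setminus\{0,-1\}$ and hence everywhere by additivity. The hypothesis then collapses to Vukman's identity $f(x)=-x^{2}f(x^{-1})$, and invoking Vukman's theorem (reference \cite{vukman1987} of this paper, whose conclusion on a noncommutative division ring of characteristic not $2$ is $f=0$) legitimately completes the proof; your closing remarks about a self-contained treatment of that step are only a sketch, but they are not needed once the citation is accepted. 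It is worth noting how this relates to the paper's own reduction device (Lemma \ref{lem3}): the more obvious analogue here would be to add the identity to its companion, obtaining $(f+g)(x)=-x^{2}(f+g)(x^{-1})$ and hence $g=-f$ by Vukman, but that still leaves the residual ``plus-sign'' equation $f(x)=x^{2}f(x^{-1})$ to be solved. Your normalise-first-then-prove-$f=g$ route avoids that residual case entirely, which is a genuine gain in economy over the naive reduction.
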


Later, Catalano and Merch\'{a}n \cite{catalano2024} studied the identity
$f(x)=-x^n g(x^{-1})$
on a division ring $D^\times$, where $f , g$ are additive maps on $D$ and $n$ is a positive integer. They gave some calculations and obtained results for $n = 3$ and $n = 4$ with the assumption $\text{\rm char}\,D\ne 2, 3$.
Motivated by them, Luo, Chen, and Wang \cite{luo2023} generalized Catalano and Merch\'{a}n's results on division rings and on matrix rings over a division ring for $n > 2$ with characteristic zero or a prime integer $p>n$.
A few months later, Ferreira, Dantas, and Moraes \cite{ferreira2023} described the $n = 1$ case on division rings of characteristic not $2$ and on matrix rings over a division ring of characteristic not $2$. They also studied it on fields of characteristic $2$ and on matrix rings over a field of characteristic $2$.
In the recent paper \cite{lee2024}, the second and third authors completely solved it on noncommutative division rings of characteristic not $2$. They provided the following characterization.

\begin{thm} (\cite[Theorem 5.1]{lee2024})
  Let $D$ be a noncommutative division ring with ${\text{\rm char}}\,D \ne 2$. Let $f, g \colon D \rightarrow D$ be additive maps satisfying
  $f(x) = x^n g(x^{-1})$
  for all $x \in D^{\times}$, where $n\ne 2$ is a positive integer. Then $f = 0$ and $g = 0$.
\end{thm}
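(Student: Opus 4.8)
The plan is to exploit the symmetry of the hypothesis, to linearise away the inversion by means of Hua's identity, and then to run the standard dichotomy supplied by the theory of generalized polynomial identities (GPIs). First I would record the cheap consequences. Setting $x=1$ gives $f(1)=g(1)=:c$, and replacing $x$ by $x^{-1}$ in $f(x)=x^n g(x^{-1})$ rewrites it as $g(x)=x^n f(x^{-1})$; hence the two relations are equivalent and the pair $(f,g)$ enters symmetrically. These substitutions alone are self-consistent and yield nothing, so genuine content must come from using additivity across a \emph{nontrivial} decomposition of the argument.

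The crucial step is to eliminate $x^{-1}$ and produce an honest functional/generalized polynomial identity. I would apply additivity of $f$ to a sum of two invertibles and use the parallel-sum identity $(u^{-1}+v^{-1})^{-1}=v(u+v)^{-1}u$ (a cousin of Hua's identity $a-(a^{-1}+(b^{-1}-a)^{-1})^{-1}=aba$), feeding the functional equation back in for each inverse term. Concretely, $f(u^{-1}+v^{-1})=f(u^{-1})+f(v^{-1})=u^{-n}g(u)+v^{-n}g(v)$ on one side, while the other side expands via $(u^{-1}+v^{-1})^n g\big(v(u+v)^{-1}u\big)$; specialising $v=1$ and iterating the relation removes all remaining inverses. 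The outcome is an identity of the schematic shape $\sum_i p_i(x)\,\theta_i\,q_i(x)=0$ for all $x$, where $p_i,q_i$ are noncommutative polynomials and the $\theta_i$ are assembled from the unknown values $f(\cdot),g(\cdot)$ and the constant $c$. The exponent $n$ survives as the power $x^n$ sitting inside these coefficients, and this is exactly the place where the hypothesis $n\ne2$ will be used.

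Now I would invoke the Martindale–Beidar machinery. \emph{Either} the identity just obtained is a nontrivial GPI, in which case $D$ is finite-dimensional over its centre $Z=Z(D)$ and is central simple; extending scalars to the algebraic closure gives $D\otimes_Z\overline Z\cong M_k(\overline Z)$ with $k\ge2$ since $D$ is noncommutative, and I would solve the matrix version by evaluating on matrix units, the constraint $n\ne2$ forcing the coefficients to cancel and delivering $f=g=0$. \emph{Or} $D$ satisfies no nontrivial GPI, so the identity is trivial and each coefficient functional must vanish; the theory of functional identities then pins $f$ and $g$ down to standard (left/right multiplication) maps, say $f(x)=xa$, $g(x)=xb$. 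Substituting back gives $xa=x^n(x^{-1}b)=x^{n-1}b$ for all $x\in D^\times$; taking $x=1$ forces $a=b$, and then $(x^{n-1}-x)a=0$ for all $x$, so choosing $x$ with $x^{n-1}\ne x$ yields $a=b=0$ precisely when $n\ne2$. (For $n=2$ the same computation leaves the genuine solution $f(x)=g(x)=xq$, which is why that value is excluded.) Either way one obtains $f=g=0$.

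The step I expect to be the main obstacle is the second one: carrying out the inversion-elimination cleanly enough that the resulting GPI is manifestly nontrivial, and then tracking the exponent $n$ through the noncommutative expansion so that $n=2$ is seen to be the unique degenerate value. A secondary difficulty is the low-dimensional PI case (small $k$, e.g.\ quaternion-type division rings), where the matrix-unit analysis is most delicate; here $\mathrm{char}\,D\ne2$ is needed to divide by $2$, and its failure is exactly what forces the separate treatment of characteristic two.
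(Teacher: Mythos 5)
Your overall architecture (Hua's identity to eliminate inverses, then the GPI/functional-identity dichotomy) is indeed the skeleton of the argument in \cite[Theorem 5.1]{lee2024} and of the characteristic-two analogues proved in this paper, but two of your concrete steps would fail as described. First, in the finite-dimensional branch you propose to extend scalars to $D\otimes_Z\overline Z\cong \M_k(\overline Z)$ and ``solve the matrix version by evaluating on matrix units.'' The maps $f$ and $g$ are only additive, not $Z$-linear, so they do not extend to the scalar extension at all; only genuine polynomial identities with central coefficients can be transported to $\M_r(F)$ (as is done with $Q(X,Y)$ in Lemma 4.6 here), never the unknown maps themselves. The actual treatment of the finite-dimensional case goes through Wedderburn's theorem to get $|Z(D)|=\infty$, then Vandermonde and additive-polynomial arguments on central scalars; this is where most of the real work lies, and your sketch defers exactly that part. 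Second, in the non-GPI branch, functional identity theory does not pin $f$ down to $f(x)=xa$: the conclusion (cf.\ \cite[Corollary 2.4]{lee2024}, used in Lemma 4.6) is that $f$ is an elementary operator $f(x)=\sum_j c_j x d_j$, possibly with a central-valued summand. Substituting such an operator back into $f(x)=x^ng(x^{-1})$ still leaves $x^{-1}$ inside the operator and produces a GPI that typically forces $D$ finite-dimensional, throwing you back into the first branch rather than finishing with the clean computation $(x^{n-1}-x)a=0$.

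A smaller but real issue: the identity that Hua's trick yields is a functional identity of the shape $f(a^2b)=\big((1+ab)^n+(ab)^n+1\big)f(a)+a^nf(b)$ for commuting $a,b$ (Eq.~\eqref{eq:1}), i.e.\ it relates values of $f$ at \emph{varying} points with polynomial coefficients; it is not yet a one-variable GPI with fixed unknown coefficients, so ``either nontrivial GPI or each coefficient vanishes'' is not available until after a further manipulation (the Jacobi-identity trick of Lemma \ref{lem2} or its analogue in \cite{lee2024}). Finally, note that in characteristic zero the whole theorem has a two-line proof via $f(2x)=2^{n-1}f(x)=2f(x)$; the machinery is only needed in characteristic $p>2$, where $2^{n-2}\equiv 1 \pmod p$ can occur for $n\ne 2$, and your sketch does not engage with that case specifically.
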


We remark that, on commutative fields $F$ of characteristic not $2$, the identity $f(x) = x^n g(x^{-1})$  for all $x \in F^{\times}$ was completely determined by Ng \cite{ng1987}.
Although the identity $f(x) = x^n g(x^{-1})$ for $n$ a positive integer has been completely solved on division rings of characteristic not $2$, it still remains unknown on division rings of characteristic $2$. In the paper we will prove the following main theorem.\vskip6pt

\begin{thm}  \label{thm6}
Let $D$ be a noncommutative division ring of characteristic $2$, and let $f, g \colon D\to D$ be additive maps. Suppose that  $f(x)=x^n g(x^{-1})$
for all $x\in D^{\times}$, where $n$ is a nonnegative integer. The following hold:
\begin{enumerate}[(i)]
  \item If $n = 2$, then $f = g$ and $f(x)=xf(1)$ for all $x\in D$;
  \item If $n \ne 2$, then $f =0$ and $g=0$.
\end{enumerate}
\end{thm}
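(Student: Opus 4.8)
The plan is to turn the multiplicative relation $f(x)=x^{n}g(x^{-1})$ into honest ring-theoretic constraints by means of Hua's identity, and then to apply the theories of generalized polynomial identities (GPIs) and of functional identities (FIs) to determine $f$ and $g$.

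First I would record the elementary consequences. Setting $x=1$ gives $f(1)=g(1)=:c$, and replacing $x$ by $x^{-1}$ rewrites the hypothesis as the companion relation $g(x)=x^{n}f(x^{-1})$, so that $f$ and $g$ play symmetric roles. Note that the substitution $x\mapsto x+1$ yields nothing, being forced by additivity; this is precisely why one cannot proceed by additive substitutions alone and must inject a relation that is not a formal consequence of additivity. Hua's identity $aba=a-(a^{-1}+(b^{-1}-a)^{-1})^{-1}$ — valid whenever the displayed inverses exist, the two signs coinciding since $\mathrm{char}\,D=2$ — is exactly such a relation, since it expresses the product $aba$ as an additive combination of inverses of expressions in $a$ and $b$.

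The core step is to apply the additive map $f$ to Hua's identity. Each term on the right is of the form $f(w^{-1})$ for a rational expression $w$ in $a,b$, and the hypothesis rewrites $f(w^{-1})=w^{-n}g(w)$; expanding $g$ additively and iterating, one reaches an identity in which $f$ and $g$ occur evaluated only at additive (non-inverted) arguments in the two independent variables $a,b$, multiplied by rational coefficients. After clearing denominators this becomes a functional identity of bounded complexity on $D$. Organising this expansion so that the resulting identity is genuinely informative rather than a tautology, and controlling the degrees of the monomials that occur so that the FI/GPI machinery applies, is the step I expect to be the main obstacle.

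With such an identity in hand I would split according to $[D:Z(D)]$. If $D$ is infinite-dimensional (or merely of sufficiently large dimension) over its centre, then $D$ is a $d$-free subset of itself for the relevant $d$, so by the theory of functional identities the identity admits only standard solutions; this forces $f$ and $g$ to be elementary operators $x\mapsto\sum_i a_i x b_i$. Substituting these into $f(x)=x^{n}g(x^{-1})$ gives a one-variable GPI, and since $D$ satisfies no nontrivial GPI (Amitsur) in this case the coefficients must collapse; a short computation leaves $f(x)=xc$ and $g(x)=xc'$. The equality $f(1)=g(1)$ forces $c=c'$, and comparing $f(x)=xc$ with $x^{n}g(x^{-1})=x^{n-1}c$ yields $(x-x^{n-1})c=0$ for all $x\in D^{\times}$; in an infinite division ring this gives $c=0$, hence $f=g=0$, unless $n=2$, in which case the relation holds identically and returns exactly $f=g$ with $f(x)=xf(1)$. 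The complementary case, $D$ finite-dimensional over $Z(D)$, is the genuine GPI case: since $D$ is noncommutative it is infinite (Wedderburn), so $Z(D)$ is infinite, and extending scalars to $\overline{Z(D)}$ turns the identity into a GPI on a matrix algebra $\M_{k}(\overline{Z(D)})$ with $k\ge2$, which I would solve directly — watching the characteristic-$2$ phenomenon whereby the exceptional family $f=g$, $f(x)=xf(1)$ reappears for $n=2$ as the collapse of the characteristic-$\neq2$ solution $g(x)=-xf(1)$. Together these dispose of all cases.
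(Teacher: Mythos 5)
Your high-level strategy (Hua's identity to break the inversion, then an FI/GPI dichotomy on $[D:Z(D)]$) is the same as the paper's, but there are two genuine gaps, and they sit exactly at the points you flag as "the main obstacle" or propose to "solve directly". First, the expansion of $f$ applied to Hua's identity does not, as written, terminate in a functional identity of bounded complexity: each application of $f(w^{-1})=w^{-n}g(w)$ reintroduces inverted coefficients, and iterating blindly is circular. The paper's way out is to apply Hua's identity to \emph{commuting} pairs $a,b$, where $(a+aba)^n=a^n(1+ab)^n$ and $(b^{-1}+a)^{-n}b^{-n}=(1+ab)^{-n}\cdot(ab)^n\cdot a^{-n}\cdots$ all collapse to polynomial coefficients, yielding the clean relation $f(a^2b)=((1+ab)^n+(ab)^n+1)f(a)+a^nf(b)$ for $ab=ba$; a genuine multivariable FI is then manufactured from this via $[xy,z]+[yz,x]+[zx,y]=0$ and the Jacobi identity. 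Without some such device your "identity in two independent variables" is not actually constructed.

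Second, the finite-dimensional case is not a routine matrix verification, and your proposed scalar extension is not available: $f$ and $g$ are merely additive, not $Z(D)$-linear, so they do not extend to $D\otimes_{Z(D)}\overline{Z(D)}$. More seriously, the statement is \emph{false} for commutative fields of characteristic $2$ (e.g.\ $f(x)=x^{2^{k}}$ satisfies $f(x)=x^{2^{k+1}}f(x^{-1})$), so Frobenius-type solutions survive on every subfield and cannot be excluded by any computation that only sees a maximal subfield or the centre. Ruling them out in the finite-dimensional noncommutative case is where most of the paper's work lies: it uses the classification of additive polynomials over infinite fields of characteristic $p$ to pin $n$ down to $2^{s+t}+2^{s}$, then a maximal-subfield/matrix-unit evaluation to kill $f$ on $Z(D)$, plus Kaplansky's and Jacobson's theorems in the power-of-two case. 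Your outline would also benefit from the paper's opening reduction to $f=g$ (apply the hypothesis twice to see $f+g$ satisfies the same identity), which halves the bookkeeping throughout.
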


We remark that Theorem \ref{thm6} is in general not true if $D$ is a field of characteristic $p>0$. See \cite[Example 4.3 (iii) and (iv)]{lee2024}.
Also, for $n=0$, Theorem \ref{thm6} is independent of the characteristic of $D$. That is, we have the following.

\begin{thm}  \label{thm9}
Let $D$ be a noncommutative division ring, and let $f, g \colon D\to D$ be additive maps. Suppose that  $f(x)=g(x^{-1})$
for all $x\in D^{\times}$. Then $f=0$ and $g=0$.
\end{thm}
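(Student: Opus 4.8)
Since $n=0$, the hypothesis reads $f(x)=g(x^{-1})$ for all $x\in D^{\times}$, and replacing $x$ by $x^{-1}$ gives $g(x)=f(x^{-1})$ as well. The plan is first to extract an inversion-antisymmetry for $f$, and then to feed it into Hua's identity.

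First I would exploit the classical identity $(1-x)^{-1}+(1-x^{-1})^{-1}=1$, valid for every $x\in D^{\times}\setminus\{1\}$. Applying the additive map $g$ and using $g(y^{-1})=f(y)$ (with $y=1-x$ and $y=1-x^{-1}$) together with $g(1)=f(1)$ turns this into $f(1-x)+f(1-x^{-1})=f(1)$; expanding by additivity yields $f(x)+f(x^{-1})=f(1)$ for all $x\ne 0,1$. Since $f(x^{-1})=g(x)$, the additive map $h:=f+g$ is constant, equal to $f(1)$, on $D\setminus\{0,1\}$; choosing $x,y$ with $x,y,x+y\notin\{0,1\}$ (possible because a noncommutative division ring is infinite by Wedderburn's theorem) and comparing $h(x+y)$ with $h(x)+h(y)$ forces $f(1)=0$. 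Hence $f(x^{-1})=-f(x)$ for every $x\in D^{\times}$.

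The heart of the argument is then to apply $f$ to Hua's identity $a-aba=\big(a^{-1}+(b^{-1}-a)^{-1}\big)^{-1}$, which holds whenever $a\ne 0$, $b\ne 0$ and $b\ne a^{-1}$. Using $f(z^{-1})=-f(z)$ on the right-hand side and peeling off the inner inverses one at a time (each step invoking only additivity and the antisymmetry just established) collapses the whole expression into the clean relation $f(aba)=f(a)+f(b)$ on this domain. In particular $b=1$ recovers $f(a^2)=f(a)$, but the decisive move is to vary $b$: for a fixed $a\ne 0,1$ pick $b_1,b_2$ with $b_1,b_2,b_1+b_2\notin\{0,a^{-1}\}$ (again possible since $D$ is infinite), and expand $f\big(a(b_1+b_2)a\big)=f(ab_1a+ab_2a)$ in two ways. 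Expanding the left argument first gives $f(a)+f(b_1)+f(b_2)$, while splitting the sum first gives $2f(a)+f(b_1)+f(b_2)$. Comparing the two returns $f(a)=0$, and this conclusion is insensitive to the characteristic (in characteristic $2$ the term $2f(a)$ simply vanishes, and the comparison still forces $f(a)=0$).

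Thus $f(a)=0$ for all $a\ne 0,1$, and since $f(0)=f(1)=0$ we conclude $f\equiv 0$; then $g(x^{-1})=f(x)=0$ for all $x\ne 0$ gives $g\equiv 0$. I expect the main obstacle to be organizing the Hua computation so that every inversion is resolved by the single relation $f(z^{-1})=-f(z)$, and, more conceptually, pinning down where noncommutativity is actually used: it enters only through the infiniteness of $D$, which is exactly what fails for a finite field such as $\mathbb{F}_4$ (where genuine nonzero solutions $f,g$ do exist). The argument must therefore not secretly require anything stronger than the freedom to choose the auxiliary elements $b_1,b_2$.
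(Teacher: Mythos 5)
Your proof is correct, and it takes a genuinely different route from the paper's. The paper first disposes of characteristic $2$ separately (its Theorem 3.1, which requires Chuang's theorem on the additive subgroup generated by the values of $x^4+x$ together with Posner's theorem), and only then, for $\operatorname{char} D\ne 2$, uses Hua's identity to prove $(f+g)(ab)=f(1)$, hence $f=-g$; it finishes by showing $f(x^2+x)=0$, linearising to $f(xy+yx)=0$, and dividing $2x^2\in\ker f$ by $2$ --- a step that genuinely needs $\operatorname{char} D\ne 2$. You instead obtain $f=-g$ and $f(1)=0$ from the elementary rational identity $(1-x)^{-1}+(1-x^{-1})^{-1}=1$, and then use Hua's identity for a different purpose: to derive the relation $f(aba)=f(a)+f(b)$ (valid for $a,b\ne 0$, $ab\ne 1$), after which the double expansion of $f(a(b_1+b_2)a)$ yields $f(a)=2f(a)$ and hence $f(a)=0$. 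I checked the key computation: $f(a-aba)=-f\bigl(a^{-1}+(b^{-1}-a)^{-1}\bigr)=f(a)+f(b^{-1}-a)=f(b^{-1})=-f(b)$, which indeed gives $f(aba)=f(a)+f(b)$, and the choice of $b_1,b_2$ avoiding the finitely many excluded values is legitimate because a noncommutative division ring is infinite. Your argument is uniform in the characteristic, so it bypasses the paper's Theorem 3.1 entirely and uses no structure theory (no GPI, PI, or Chuang-type results); it in fact proves the stronger statement that the conclusion holds over any infinite division ring, commutative or not, which is consistent with the finite-field counterexamples such as $f(x)=x^2$, $g(x)=x$ on $\mathbb{F}_4$. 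The only cosmetic remark is that the restriction $a\ne 1$ in your final step is unnecessary (and harmless, since $f(1)=0$ is already known).
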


The theorem will be proved in the last section.
Combining Theorems \ref{thm6} and \ref{thm9}, \cite[Theorem]{vukman1987}, \cite[Theorem 1.4]{dar2023} and \cite[Theorem 5.1]{lee2024}, the identity $f(x) = x^n g(x^{-1})$ for $n$ a nonnegative integer has been completely solved on noncommutative division rings. We state the conclusion here.

\begin{thm} \label{thm4}
Let $D$ be a noncommutative division ring, and let $n$ be a nonnegative integer. Suppose that $f,g \colon D\to D$ are additive maps satisfying
$f(x)=x^n g(x^{-1})$
for all $x\in D^{\times}$.  The following hold:
\begin{enumerate}[(i)]
  \item If $n = 2$, then $f = g$ and $f(x)=xf(1)$ for all $x\in D$;
  \item If $n \ne 2$, then  $f =0$ and $g=0$.
\end{enumerate}
\end{thm}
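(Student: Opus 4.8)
The plan is to recognise that Theorem~\ref{thm4} is not a fresh computation but the consolidation of the characteristic-two analysis of the present paper with the already established characteristic-not-two results, so I would prove it by a case split---first on $\text{\rm char}\,D$, then on $n$---and verify that the cases are exhaustive and that each produces exactly the dichotomy (i)/(ii).

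First I would dispose of the even case: if $\text{\rm char}\,D=2$, then both (i) and (ii) are literally the statement of Theorem~\ref{thm6}, and nothing further is required. Hence I may assume $\text{\rm char}\,D\ne 2$ for the remainder of the argument, and split on $n$. The value $n=0$ is covered, in fact for an arbitrary noncommutative division ring, by Theorem~\ref{thm9}, which yields $f=0$ and $g=0$; since $0\ne 2$ this agrees with (ii). For a positive integer $n\ne 2$ I would invoke \cite[Theorem 5.1]{lee2024}, which again forces $f=0$ and $g=0$, establishing (ii) throughout this range.

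The only remaining case is $\text{\rm char}\,D\ne 2$ with $n=2$, and here I would reduce to the Dar--Jing theorem by a sign change (the identity treated there, $f(x)=-x^{2}g(x^{-1})$, differs from ours only by the sign that vanishes in characteristic $2$). Rewriting the hypothesis $f(x)=x^{2}g(x^{-1})$ as $f(x)+x^{2}h(x^{-1})=0$ with $h:=-g$ additive, and applying \cite[Theorem 1.4]{dar2023} to the pair $(f,h)$, I obtain $f(x)=xq$ and $h(x)=-xq$ for a fixed $q\in D$; hence $g=-h$ satisfies $g(x)=xq=f(x)$, so $f=g$, and evaluating at $x=1$ gives $q=f(1)$, whence $f(x)=xf(1)$, which is exactly (i). This case also recovers and generalises Vukman's identity $f(x)=-x^{2}f(x^{-1})$ from \cite{vukman1987}.

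I do not expect a genuine obstacle, since every ingredient is a previously proved theorem; the only real work is bookkeeping. Specifically, I would check that the substitution $g\mapsto -g$ preserves additivity, and that the four cases---$\text{\rm char}\,D=2$, and for $\text{\rm char}\,D\ne 2$ the subcases $n=0$, positive $n\ne 2$, and $n=2$---partition all nonnegative integers $n$ and yield precisely the two conclusions (i) and (ii) claimed in the statement.
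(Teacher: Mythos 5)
Your proposal is correct and is essentially the paper's own argument: the authors prove Theorem~\ref{thm4} simply by combining Theorem~\ref{thm6} (characteristic $2$), Theorem~\ref{thm9} ($n=0$), \cite[Theorem 1.4]{dar2023} ($n=2$, characteristic not $2$, after the same sign normalisation you perform) and \cite[Theorem 5.1]{lee2024} (positive $n\ne 2$, characteristic not $2$). Your case analysis is exhaustive and the bookkeeping, including the substitution $h:=-g$ and the identification $q=f(1)$, is accurate.
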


\section{Some preliminaries}
Throughout the paper, except for the last section, to keep the statements neat we always make the following assumption:\vskip6pt

{\it Let $D$ be a noncommutative division ring, $\text{\rm char}\,D=2$, and let $f, g \colon D\to D$ be additive maps.}
\vskip6pt

In order to prove Theorem \ref{thm6}, we begin with some preliminary results.
First, Theorem \ref{thm6} can be reduced to the case $f=g$.

\begin{lem}\label{lem3}
To prove Theorem \ref{thm6}, it suffices to assume that $f=g$.
\end{lem}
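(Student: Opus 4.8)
The plan is to reduce the general identity to its \emph{symmetric} instance by passing to the sum $h:=f+g$. Throughout I assume that Theorem \ref{thm6} has already been established in the special case where the two additive maps coincide; this is exactly the hypothesis the lemma entitles me to use.

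First I would symmetrise the hypothesis. Replacing $x$ by $x^{-1}$ in $f(x)=x^n g(x^{-1})$ and multiplying on the left by $x^n$ yields the companion identity $g(x)=x^n f(x^{-1})$ for all $x\in D^{\times}$. Adding the two identities and using additivity gives
\[
h(x)=f(x)+g(x)=x^n\bigl(g(x^{-1})+f(x^{-1})\bigr)=x^n h(x^{-1}),
\]
so the additive map $h=f+g$ satisfies the identity with equal maps. The decisive observation is that $h$ is automatically normalised: evaluating the original identity at $x=1$ gives $f(1)=g(1)$, whence $h(1)=f(1)+g(1)=2f(1)=0$ because $\text{char}\,D=2$.

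Next I would feed $h$ into the assumed special case. If $n\ne 2$, that case forces $h=0$ outright; if $n=2$, it gives $h(x)=xh(1)$, which together with $h(1)=0$ again yields $h=0$. In either situation $f+g=0$, and since $\text{char}\,D=2$ this means $f=g$. At this point the original pair $(f,g)$ has itself become an instance of the equal-maps hypothesis: $f$ satisfies $f(x)=x^n f(x^{-1})$. Invoking the assumed special case once more then gives $f(x)=xf(1)$ when $n=2$ and $f=0$ when $n\ne 2$; recalling that $g=f$, this is precisely the conclusion of Theorem \ref{thm6}.

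The argument is short, so there is no serious computational obstacle; the subtle point is structural and specific to characteristic two. In characteristic $\ne 2$ one would split a solution into symmetric and antisymmetric parts, but here the antisymmetric part collapses and the only usable combination is $h=f+g$. What makes the reduction go through nonetheless is the normalisation $h(1)=2f(1)=0$: it is exactly this that upgrades the a priori weaker $n=2$ output $h(x)=xh(1)$ to $h\equiv 0$ and thereby delivers $f=g$. The one thing to be careful about is that the assumed special case is applied twice --- once to $h$ to obtain $f=g$, and once to $f$ to read off the final form.
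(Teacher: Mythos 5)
Your proposal is correct and follows essentially the same route as the paper: symmetrise to show $h=f+g$ satisfies the identity with equal maps, use $f(1)=g(1)$ together with $\text{char}\,D=2$ to force $h=0$ and hence $f=g$, then apply the assumed special case to $f$ itself. The paper's own proof is just a terser version of exactly this argument.
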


\begin{proof}
Clearly, we have $g(x) = x^n f(x^{-1})$ for all $x \in D^{\times}$. Thus
  \begin{eqnarray}
(f+g)(x) = x^n (f+g)(x^{-1})
\label{eq:5}
\end{eqnarray}
for all $x \in D^{\times}$.

(i)\ $n=2$:\ Assume that Theorem \ref{thm6} (i) is true for the case $f = g$.
By Eq.\eqref{eq:5}, we have $(f+g)(x) =x(f+g)(1)$ for all $x\in D$.
Note that $f(1)=g(1)$. Hence $f=g$, as desired.

(ii)\ $n\ne 2$:\
Assume that Theorem \ref{thm6} (ii) is true for the case $f = g$.
Then, by Eq.\eqref{eq:5}, we get $f+g=0$ and so $f=g$. Thus $f=0$ follows.
%
%
\end{proof}

\begin{lem} \label{lem4}
 Suppose that  $f(x)=x^n f(x^{-1})$
for all $x\in D^{\times}$, where $n$ is a positive integer.  Then
  \begin{eqnarray} \label{eq:1}
    f(a^2b) &=& ((1+ab)^n +(ab)^n +1 ) f(a) + a^n f(b)
  \end{eqnarray}
  for all $a, b \in D$ with $ab=ba$.
\end{lem}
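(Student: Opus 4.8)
The plan is to invoke Hua's identity. Since $a$ and $b$ commute we have $aba=a^2b$, and Hua's identity yields
\[
a^2b \;=\; a-\bigl(a^{-1}+(b^{-1}-a)^{-1}\bigr)^{-1},
\]
valid whenever the displayed inverses exist, i.e.\ (in a division ring) whenever $a,b\neq 0$ and $ab\neq 1$. Writing $w:=a^{-1}+(b^{-1}-a)^{-1}$ and $v:=b^{-1}-a$ and applying the additive map $f$, and using $\text{\rm char}\,D=2$ (so that $-1=1$), this reduces the problem to computing $f(w^{-1})$, since $f(a^2b)=f(a)+f(w^{-1})$.

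Next I would repeatedly use the hypothesis in the form $f(x^{-1})=x^{-n}f(x)$ (the defining relation $f(x)=x^nf(x^{-1})$ evaluated at $x^{-1}$) to push $f$ off every inverse. Concretely, $f(w^{-1})=w^{-n}f(w)$, then $f(w)=f(a^{-1})+f(v^{-1})=a^{-n}f(a)+v^{-n}f(v)$, and finally $f(v)=f(b^{-1})+f(a)=b^{-n}f(b)+f(a)$ (again $-=+$). Assembling these expresses $f(a^2b)$ as an explicit left-multiple of $f(a)$ plus a left-multiple of $f(b)$, with coefficients built from the powers $w^{-n},v^{-n},a^{-n},b^{-n}$.

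The crucial simplification is that, because $a$ and $b$ commute, every element occurring---$w$, $v$, and all their powers and inverses---lies in the commutative subring generated by $a,b$ and their inverses, so I may manipulate them as in a commutative field. In particular $w^{-1}=a-a^2b=a(1+ab)$ and $v=b^{-1}+a=b^{-1}(1+ab)$, whence $w^{-n}=a^n(1+ab)^n$ and $v^{-n}=b^n(1+ab)^{-n}$. Substituting and cancelling the $(1+ab)^{\pm n}$ and $a^{\pm n},b^{\pm n}$ factors collapses the three contributions to $(1+ab)^nf(a)$, $(ab)^nf(a)$ and $a^nf(b)$; adding the leading $f(a)$ gives exactly $\bigl((1+ab)^n+(ab)^n+1\bigr)f(a)+a^nf(b)$.

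Finally I would dispose of the degenerate cases excluded by the invertibility hypotheses. If $a=0$ or $b=0$ both sides vanish (using $0^n=0$ and $(1+1)^n=0$ in characteristic two for $n\geq 1$); and if $ab=1$, i.e.\ $b=a^{-1}$, then $a^2b=a$, the coefficient $(1+1)^n+1+1$ vanishes, and $a^nf(a^{-1})=f(a)$ by hypothesis, so the identity again holds by direct substitution. The step I expect to require the most care is the coefficient bookkeeping in the commutative reduction---verifying that the powers of $(1+ab)$, $a$ and $b$ cancel to leave precisely the stated polynomial---together with keeping exact track of which elements must be assumed nonzero, so that the separate treatment of the degenerate cases is genuinely exhaustive.
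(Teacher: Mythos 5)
Your proposal is correct and follows essentially the same route as the paper: apply Hua's identity to $a+a^2b$, push $f$ through the inverses via $f(x^{-1})=x^{-n}f(x)$ and additivity, and simplify the coefficients inside the commutative (sub)field generated by $a$ and $b$. Your explicit treatment of the degenerate cases $a=0$, $b=0$, $ab=1$ is a small point of extra care that the paper passes over silently.
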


\begin{proof}
  Let $a,b \in D$ with $ab \ne 0,1$. Then, by Hua's identity,
  \begin{eqnarray*}
     a+aba = (a^{-1}+(b^{-1}+a)^{-1})^{-1}.
  \end{eqnarray*}
  Thus
  \begin{eqnarray*}
   f(a+aba) &=& f((a^{-1}+(b^{-1}+a)^{-1})^{-1}) \nonumber \\
            &=& (a^{-1}+(b^{-1}+a)^{-1})^{-n} f(a^{-1}+(b^{-1}+a)^{-1}) \nonumber \\
            &=& (a^{-1}+(b^{-1}+a)^{-1})^{-n} \big(f(a^{-1}) + f((b^{-1}+a)^{-1})\big)  \\
            &=& (a+aba)^n \big(a^{-n}f(a)+(b^{-1}+a)^{-n}f(b^{-1}+a)\big) \nonumber \\
            &=& (a+aba)^n\big(a^{-n} + (b^{-1}+a)^{-n}\big)f(a) + (a+aba)^n (b^{-1}+a)^{-n}f(b^{-1}) \nonumber \\
            &=& (a+aba)^n (a^{-n} + (b^{-1}+a)^{-n} )f(a) + (a+aba)^n (b^{-1}+a)^{-n} b^{-n} f(b). \nonumber
  \end{eqnarray*}
If $ab = ba \ne 0,1$, then
\begin{eqnarray*}
    f(a+a^2b) &=& \big((1+ab)^n +(ab)^n\big)f(a) + a^n f(b).
\end{eqnarray*}
This implies that, for $a, b\in D$ with $ab=ba$, we have
\begin{eqnarray*}
f(a^2b) &=& \big((1+ab)^n +(ab)^n +1\big)f(a) + a^n f(b),
\end{eqnarray*}
as desired.
\end{proof}

Applying the standard Vandermonde argument, we have the following.

\begin{lem}\label{lem6}
Suppose that $\sum_{i=0}^na_i\lambda^i=0$ for all $\lambda\in Z(D)$, where $a_i\in D$ for all $i$. If $|Z(D)|>n+1$, then $a_i=0$ for all $i$.
\end{lem}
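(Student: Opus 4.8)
The plan is to reduce the statement to the invertibility of a Vandermonde matrix over the field $Z(D)$. Since $|Z(D)|>n+1$, I can choose $n+1$ pairwise distinct elements $\lambda_0,\lambda_1,\dots,\lambda_n\in Z(D)$. Substituting each $\lambda_j$ into the hypothesis gives the system
\begin{eqnarray*}
\sum_{i=0}^n a_i\lambda_j^{\,i}=0,\qquad j=0,1,\dots,n.
\end{eqnarray*}
Writing $\mathbf a=(a_0,\dots,a_n)^{\mathrm T}$ as a column vector over $D$ and $V=(\lambda_j^{\,i})_{0\le j,i\le n}$ for the associated Vandermonde matrix, whose entries lie in $Z(D)$, and using that each $\lambda_j^{\,i}$ is central and hence commutes with the $a_i$, this system is precisely $V\mathbf a=0$.

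The key step is that $\det V=\prod_{0\le j<k\le n}(\lambda_k-\lambda_j)$ is a product of nonzero elements of the field $Z(D)$, hence itself a nonzero element of $Z(D)$ and therefore a unit. Consequently $V$ is invertible over the commutative field $Z(D)$, and every entry of $V^{-1}$ again lies in $Z(D)$. Left-multiplying the identity $V\mathbf a=0$ by $V^{-1}$, and using once more that the entries of $V^{-1}$ are central so that they pass freely through the $a_i$, I obtain $\mathbf a=V^{-1}V\mathbf a=V^{-1}\cdot 0=0$, that is, $a_i=0$ for every $i$.

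The only point requiring care is the noncommutativity of $D$: the $a_i$ need not be central, so one cannot blindly invoke linear algebra over $D$. This is exactly why the hypothesis places $\lambda$ in $Z(D)$. Because all scalars appearing in $V$ and $V^{-1}$ are central, each entry $(V^{-1}V)_{ki}=\sum_j (V^{-1})_{kj}\lambda_j^{\,i}=\delta_{ki}$ is computed entirely within $Z(D)$, after which these central scalars commute past the $a_i$ without obstruction. Thus no genuine difficulty arises beyond bookkeeping, and the standard Vandermonde argument goes through verbatim.
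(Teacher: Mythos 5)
Your proof is correct and is precisely the ``standard Vandermonde argument'' that the paper invokes for this lemma without writing out the details. The care you take with centrality of the entries of $V$ and $V^{-1}$ is exactly the point that makes the argument go through over a noncommutative $D$.
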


 Let $D\{X_1, X_2,\ldots,X_t\}$ denote the free product of $Z(D)$-algebra $D$ and the free algebra $Z(D)\{X_1, X_2,\ldots,X_t\}$ over $Z(D)$.  We refer the reader to \cite{martindale1969} or \cite{chuang1988} for details.
A generalized polynomial $f(X_1, X_2,\ldots,X_t)\in D\{X_1, X_2,\ldots,X_t\}$ is called a GPI for $D$ if $f(x_1, x_2,\ldots,x_t)=0$ for all $x_i\in D$.
We say that $D$ is a GPI-algebra if there exists a nonzero $f(X_1,\ldots,X_t)\in D\{X_1,\ldots,X_s\}$ such that $f(x_1,\ldots,x_t)=0$ for all $x_i\in D$.
In this case, we say that $f(X_1,\ldots,X_t)$ is a nontrivial GPI for $D$.
The following is a special case of \cite[Theorem 3]{martindale1969}.

\begin{thm} (Martindale 1969)\label{thm8}
Every division GPI-algebra is finite-dimensional over its center.
\end{thm}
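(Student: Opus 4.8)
The plan is to obtain this statement as a specialisation of Martindale's general structure theorem for prime rings satisfying a nontrivial generalized polynomial identity, rather than to reprove that theory from scratch. A division ring $D$ is in particular a prime ring, and the first step is to identify the relevant invariants in this special situation. Namely, one checks that the Martindale (symmetric) ring of quotients of $D$ is $D$ itself and that its extended centroid coincides with the center $Z(D)$; hence the central closure of $D$ is again $D$. This is the place where one must be a little careful, since the extended centroid is defined through nonzero ideals, and for a division ring the only nonzero ideal is $D$, which forces the maps used in the construction to be left multiplications and thus yields the identification with $Z(D)$.

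With these identifications in hand, I would invoke Martindale's theorem in the form: if a prime ring $R$ satisfies a nontrivial GPI, then its central closure is a primitive ring possessing a nonzero socle $H$, and for every minimal idempotent $e\in H$ the ring $eHe$ is a division algebra finite-dimensional over the extended centroid. Specialising to $R=D$, the central closure is $D$, which is simple Artinian, so its socle is all of $D$; moreover the only idempotents of a division ring are $0$ and $1$, so the minimal idempotent may be taken to be $e=1$, noting that $1\cdot D=D$ is a minimal right ideal because $D_D$ is simple. Then $eHe = 1\cdot D\cdot 1 = D$, and the conclusion reads precisely that $D$ is finite-dimensional over $Z(D)$, as required.

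The genuinely hard input is Martindale's theorem itself, whose proof I would not reproduce; the only obstacle on the division-ring side is the bookkeeping of the previous paragraph. If instead one wanted a self-contained argument, the natural route is to first multilinearise a given nontrivial GPI, reducing to a multilinear generalized identity in which each variable occurs exactly once in every monomial, and then to exploit the density theorem together with the fact that every nonzero element of $D$ is invertible in order to convert the identity into a finite $Z(D)$-linear dependence among a basis of $D$. The delicate point in that approach, and the main obstacle, is controlling the notion of nontriviality: ``nontrivial'' means nonvanishing in the free product $D\{X_1,\ldots,X_t\}$, and one must verify that multilinearisation and the subsequent evaluations do not silently destroy this nonvanishing, which is exactly the bookkeeping that Martindale's machinery is designed to handle.
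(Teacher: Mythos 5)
Your argument is correct and coincides with the paper's treatment: the paper offers no proof at all, simply noting that the statement is a special case of Martindale's Theorem 3, which is exactly the reduction you carry out. Your bookkeeping (extended centroid $=Z(D)$, central closure $=D$, socle $=D$, minimal idempotent $e=1$, so $eHe=D$ is finite-dimensional over $Z(D)$) is the right way to make that specialisation explicit.
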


We are now ready to prove the following lemma, which will be used in the sequel.

\begin{lem} \label{lem5}
Suppose that $f(x)=x^nf(x^{-1})$ for all $x\in D^{\times}$, where $n \geq 2$ is a positive integer. If $f(a^2)=0$ for all $a\in D$, then $f=0$.
\end{lem}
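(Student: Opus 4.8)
The plan is to use the hypothesis $f(a^2)=0$ only to force $f$ to be a class function, and then to play this against the defining identity $f(x)=x^nf(x^{-1})$; the noncommutativity of $D$, without which the statement is false, will enter precisely through conjugation.

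First I would record what killing all squares buys. Since $\operatorname{char}D=2$, expanding $0=f((a+b)^2)=f(a^2)+f(ab)+f(ba)+f(b^2)$ and using $f(a^2)=f(b^2)=0$ gives $f(ab)=f(ba)$ for all $a,b\in D$. Putting $a=ux$, $b=u^{-1}$ this reads $f(uxu^{-1})=f(x)$ for all $x\in D$, $u\in D^{\times}$; that is, $f$ is conjugation invariant. Feeding this into the identity, for every $u\in D^{\times}$,
\[
f(x)=f(uxu^{-1})=(uxu^{-1})^nf\big((uxu^{-1})^{-1}\big)=ux^nu^{-1}f(x^{-1}),
\]
and comparison with $f(x)=x^nf(x^{-1})$ yields $(ux^nu^{-1}+x^n)f(x^{-1})=0$. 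Hence, whenever $f(x^{-1})\neq0$, the element $x^n$ commutes with every unit, i.e. $x^n\in Z(D)$; replacing $x$ by $x^{-1}$ gives the key dichotomy
\[
f(x)\neq0\ \Longrightarrow\ x^n\in Z(D),
\]
so $f$ vanishes on $\{x\in D^{\times}:x^n\notin Z(D)\}$. (Lemma~\ref{lem4} with $b=1$ offers a parallel route: it gives $p(a)f(a)=a^nf(1)$ with $p(a)=(1+a)^n+a^n+1$, and the substitution $a\mapsto a+1$, together with $p(a+1)=p(a)$, forces $f(1)=0$ and then $p(a)f(a)=0$; but this pins down the vanishing locus through $p$ only when $n$ is not a power of $2$, so I prefer the conjugation route, which is uniform in $n$.)

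Now I would assume $f\neq0$ and seek a contradiction. Fix $a$ with $f(a)\neq0$, so $a^n\in Z(D)$. If some $b$ satisfied $b^n\notin Z(D)$, then $(\lambda b)^n=\lambda^nb^n\notin Z(D)$ for each $\lambda\in Z(D)^{\times}$, whence $f(\lambda b)=0$ and $f(a+\lambda b)=f(a)\neq0$; the dichotomy forces $(a+\lambda b)^n\in Z(D)$ for all $\lambda\in Z(D)$. Expanding $(a+\lambda b)^n=\sum_{k=0}^{n}\lambda^k\sigma_k$ with $\sigma_n=b^n$ and commuting against an arbitrary $d\in D$, Lemma~\ref{lem6} makes every $[\sigma_k,d]$ vanish, so $b^n\in Z(D)$, a contradiction. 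Thus $x^n\in Z(D)$ for all $x\in D$, i.e. $[x^n,y]=0$ is a nontrivial GPI for $D$; by Theorem~\ref{thm8}, $D$ is finite dimensional over $Z(D)$, and a finite-dimensional central division algebra of characteristic two in which every element has central $n$-th power must be commutative (a separable maximal subfield collapses onto $Z(D)$). This contradicts noncommutativity, so $f=0$.

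The step I expect to be the real obstacle is the one using Lemma~\ref{lem6}: the Vandermonde argument needs $|Z(D)|>n+1$, so a small, necessarily finite, center must be treated apart. There $f$ still vanishes on the $Z(D)$-scaling-stable set $\{x:x^n\notin Z(D)\}$, and the point becomes whether this set additively generates $D$ (equivalently, whether the elements with non-central $n$-th power can be confined to a proper $Z(D)$-subspace); if it generates, $f=0$ is immediate, and otherwise one again routes the finite-center case through the GPI dichotomy of Theorem~\ref{thm8}. I also anticipate that the closing structural claim, that an $n$-central division ring is commutative, will require the finite-dimensional structure theory (separable splitting fields and the characteristic-two Frobenius); together with the small-center bookkeeping this is where the genuine difficulty sits.
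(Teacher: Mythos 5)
Your opening moves are sound and, up to that point, genuinely different from (and arguably cleaner than) the paper's argument: the paper never passes to conjugation invariance, but instead derives $[a,(ab)^n]=0$ whenever $f(ab)\ne 0$ and then multiplies the two branches of a case analysis together to produce the single generalized polynomial identity $[a,(ax)^n][a,(a(b+x))^n]=0$, which via Theorem~\ref{thm8} forces $[D:Z(D)]<\infty$ \emph{before} any Vandermonde step is attempted. Your dichotomy ``$f(x)\ne 0\Rightarrow x^n\in Z(D)$'' is correct and strictly stronger than what the paper extracts at the corresponding stage, and the closing appeal to the fact that a division ring with all $n$-th powers central is commutative is fine --- that is exactly Kaplansky's theorem \cite{kaplansky1951}, which the paper itself invokes in the proof of Lemma~\ref{lem1}; you do not need the separable-maximal-subfield argument you gesture at.

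The genuine gap is the one you yourself flag and then leave unresolved: Lemma~\ref{lem6} requires $|Z(D)|>n+1$, and nothing in your argument up to that point rules out a finite (or small) centre. Your two suggested escape routes do not work as stated: whether $\{x: x^n\notin Z(D)\}$ additively generates $D$ is exactly as hard as the original problem, and Theorem~\ref{thm8} cannot be invoked until you have actually exhibited a nontrivial GPI, which a finite centre alone does not provide. The fix is available inside your own framework and is precisely the paper's trick: from your dichotomy, for every $x\in D$ either $[x^n,d]=0$ for all $d$ or (since then $f(x)=0$, so $f(a+x)=f(a)\ne 0$) $[(a+x)^n,e]=0$ for all $e$; hence
\begin{equation*}
[x^n,d]\,[(a+x)^n,e]=0 \quad\text{for all } x,d,e\in D,
\end{equation*}
and $[X^n,Y][(a+X)^n,Z]$ is nonzero in $D\{X,Y,Z\}$ (its top-degree part in $X$ contains the monomial $X^nYX^nZ$). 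Theorem~\ref{thm8} then gives $[D:Z(D)]<\infty$, and Wedderburn's theorem \cite{wedderburn1905} forces $|Z(D)|=\infty$ because $D$ is noncommutative; only now is your Vandermonde step legitimate. Without this (or an equivalent) insertion, your proof is incomplete exactly where you predicted the difficulty would sit.
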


\begin{proof}
  Assume that $f(a^2)=0$ for all $a\in D$. Thus $f([a, b])=0$ for all $a, b\in D$.
  In particular,
  $f(ab)=f(ba)$ and $f((ab)^{-1})= f((ba)^{-1})$
  for all $a, b\in D^\times$.

  Let $a,b \in D$. By the definition of $f$, we have
  $$
  f(ab) = (ab)^n f((ab)^{-1})\ \ \text{\rm and}\ \ f(ba) = (ba)^n f((ba)^{-1}).
  $$
  If $f(ab) \ne 0$, then $(ab)^n = (ba)^n$, and so $[a, (ab)^n]=0$.

Suppose on the contrary that $f \ne 0$. Let $a \in D^{\times}$. There exists $b\in D$ such that $f(ab)\ne 0$.
  Let $x\in D$ and consider the following two cases.

Case 1:\ $f(a(b+x))\ne 0$. Then $[a, (a(b+x))^n]=0$.

Case 2:\ $f(a(b+x))=0$. Then $f(ax)=f(ab)\ne0$. So $[a, (ax)^n]=0$.

By the two cases, we have
 \begin{eqnarray}
  [a, (ax)^n][a, (a(b+x))^n]=0
\label{eq:6}
\end{eqnarray}
  for all $x\in D$. Assume that $a\notin Z(D)$. Note that
$$
  h(X):=[a, (aX)^n][a, (a(b+X))^n]
$$
is nonzero in $D\{X\}$. Indeed, it suffices to claim that $[a, (aX)^n][a, (aX)^n]\ne 0$ in $D\{X\}$. Otherwise, in particular, $[a, (ax)^n][a, (ax)^n]=0$
for all $x\in D$ and so $[a, (ax)^n]=0$ for all $x\in D$. According to \cite[Lemma 2]{lee1996}, $[a, ax]=0$ and hence $a[a, x]=0$ for all $x\in D$, forcing $a\in Z(D)$, a contradiction.

This implies that $D$ is a division GPI-algebra and thus $D$ is finite-dimensional over $Z(D)$ (see Theorem \ref{thm8}). Since $D$ is noncommutative, it follows from the well-known Wedderburn's theorem \cite{wedderburn1905} that $|Z(D)|=\infty$.

Replacing $x$ by $\lambda x$, where $\lambda\in Z(D)$, in Eq.\eqref{eq:6}, we get
 $[a, (\lambda ax)^n][a, (a(b+\lambda x))^n]=0$ and expand it as
 $$
 \sum_{i=0}^{2n}\lambda^ia_i=0,
 $$
 where $a_i\in D$. Note that $a_{2n}=[a, (ax)^n][a, (ax)^n]$. In view of Lemma \ref{lem6}, we get
 $[a, (ax)^n][a, (ax)^n]=0$ for all $x\in D$. Hence $a\in Z(D)$, a contradiction.
Thus we have $f = 0$, as desired.
\end{proof}

\section{Theorem \ref{thm6} with $n=0, 1, 2$}
We begin with studying the identity $f(x) = x^n g(x^{-1})$ with $n=0$.

\begin{thm} \label{thm7}
Assume that $f(x)=g(x^{-1})$
for all $x\in D^{\times}$. Then $f =0$ and $g = 0$.
\end{thm}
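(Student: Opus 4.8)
The plan is to mirror the Hua-identity computation of Lemma \ref{lem4}, but carried out for the exponent $n=0$ and directly for the pair $(f,g)$. First I would record the companion relation obtained by replacing $x$ with $x^{-1}$ in the hypothesis: since $f(x)=g(x^{-1})$ for all $x\in D^\times$, we also have $f(x^{-1})=g(x)$ for all $x\in D^\times$. These two relations let me pass freely between $f$, $g$ and inversion, playing the role that the scaling by $x^{n}$ plays in Lemma \ref{lem4}.

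Next I would apply Hua's identity $a+aba=(a^{-1}+(b^{-1}+a)^{-1})^{-1}$ exactly as in Lemma \ref{lem4}. Expanding $f(a+aba)=g(a^{-1}+(b^{-1}+a)^{-1})$ by additivity of $g$ and converting each term back through the two relations produces $f(a+aba)=g(b)$, the two resulting copies of $f(a)$ cancelling in characteristic $2$. Since $f(a+aba)=f(a)+f(aba)$, this yields the key identity
\[
f(aba)=f(a)+g(b),
\]
valid for all $a,b\in D$ with $a\neq 0$, $b\neq 0$ and $ab\neq 1$, the conditions guaranteeing that every inverse occurring in Hua's identity exists.

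The heart of the argument is then a rigidity observation forced by characteristic $2$: for fixed admissible $a$, the map $b\mapsto f(aba)$ is additive in $b$, whereas $b\mapsto f(a)+g(b)$ is additive only when $f(a)=0$. Concretely, I would substitute $b+b'$ for $b$ and compare the two evaluations of $f\big(a(b+b')a\big)$: on one hand the key identity gives $f(a)+g(b)+g(b')$, while on the other hand, writing $a(b+b')a=aba+ab'a$ and applying the key identity termwise gives $g(b)+g(b')$, since the two copies of $f(a)$ cancel. Hence $f(a)=0$. Because a noncommutative division ring is infinite by Wedderburn's theorem, for each nonzero $a$ one can choose $b,b'$ avoiding the finitely many forbidden values so that all three instances of the key identity apply; this forces $f(a)=0$ for every $a\neq 0$, and $f(0)=0$ trivially, so $f=0$. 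Finally $g(x)=f(x^{-1})=0$ gives $g=0$.

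I expect the only genuine obstacle to be the bookkeeping of the admissibility (nonvanishing) conditions coming from Hua's identity, namely verifying that for each fixed $a$ a valid pair $(b,b')$ with $b,b',b+b'$ nonzero and $ab,ab',a(b+b')\neq 1$ actually exists; this is routine once the infinitude of $D$ is invoked, so I anticipate no serious difficulty.
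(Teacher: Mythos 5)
Your proof is correct, but it takes a genuinely different and more elementary route than the paper. The paper first reduces to $f=g$ via Lemma \ref{lem3}, computes $f(x^2+x+1)=0$ and hence $f(x^4+x)=0$, and then invokes Chuang's theorem on the additive subgroup generated by the values of $x^4+x$ (together with Posner's theorem and a Vandermonde argument to exclude the central-values case) to show $f$ kills all commutators, before a second inversion computation yields $f(y)=f(1)$ for all $y$. You instead apply Hua's identity directly to the pair $(f,g)$ to obtain $f(aba)=f(a)+g(b)$ for $a,b\ne 0$, $ab\ne 1$, and then exploit the fact that the left side is additive in $b$ while the right side is additive only if $f(a)=0$; the only external input is the infinitude of $D$ (Wedderburn). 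I checked the key computation: $f(a+aba)=g(a^{-1})+g((b^{-1}+a)^{-1})=f(a)+f(b^{-1}+a)=g(b)$ in characteristic $2$, and the admissibility bookkeeping (the forbidden set for $b'$ is $\{0,\,a^{-1},\,b,\,a^{-1}+b\}$, which is finite) is routine as you say. Your argument is shorter and avoids all the structure theory; it is worth noting that the cancellation of the $f(a)$ terms does not actually require characteristic $2$ (in general one gets $f(a)+f(b^{-1}-a)=f(b^{-1})$), so essentially the same computation proves Theorem \ref{thm9} in arbitrary characteristic, which the paper handles separately in its final section by a related but different manipulation of Hua's identity.
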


\begin{proof}
By Lemma \ref{lem3}, we may assume that $f=g$. Let $x\in D\setminus \{0, 1\}$.
Then
 \begin{eqnarray*}
 f(x(x+1))&=& f((x+1)^{-1}x^{-1})\nonumber\\
                 &=& f((x+1)^{-1}+x^{-1})\nonumber\\
                 &=& f(x+1)+f(x)\nonumber\\
                 &=& f(1).
 \end{eqnarray*}
That is,
\begin{eqnarray}
f(x^2+x+1)=0.
\label{eq:7}
\end{eqnarray}
Note that $x\in D\setminus \{0, 1\}$ if and only if $x^2\in D\setminus \{0, 1\}$.
Replacing $x$ by $x^2$ in Eq.\eqref{eq:7}, we get $f(x^4+x^2+1)=0$.
Hence
\begin{eqnarray}
f(x^4+x)=0
\label{eq:8}
\end{eqnarray}
for all $x\in D$.
Let $A$ denote the additive subgroup of $D$ generated by all elements $x^4+x$ for $x\in D$. By Eq.\eqref{eq:8}, we get $f(A)=0$.
In view of \cite[Theorem, p.98]{chuang1987}, either $x^4+x\in Z(D)$ for all $x\in D$ or $[D, D]\subseteq A$.

Suppose first that $x^4+x\in Z(D)$ for all $x\in D$. Then $D$ is a PI-algebra and hence $[D\colon Z(D)]<\infty$ by Posner's theorem \cite{posner1960}.
Since $D$ is not commutative, this implies that $|Z(D)|=\infty$. Thus $\lambda^4 x^4+\lambda x\in Z(D)$ for all $x\in D$ and all $\lambda\in Z(D)$. In view of Lemma \ref{lem6}, $x\in Z(D)$ for all $x\in D$. That is, $D$ is commutative, a contradiction.
Hence $[D, D]\subseteq A$ and so $f(xy)=f(yx)$ for all $x, y\in D$.

Let $y\in D^\times$, and let $x\in D\setminus \{0, 1, y, y+1\}$. Such an $x$ exists since $D$ is infinite. We compute
\begin{eqnarray}  \label{eq:9}
f(y)&=&f(x)+f(x+y)\nonumber\\
      &=&f(x^{-1}+(x+y)^{-1})\nonumber\\
      &=&f(x^{-1}((x+y)+x)(x+y)^{-1})\nonumber\\
      &=&f(x^{-1}y(x+y)^{-1})\nonumber\\
      &=&f((x+y)y^{-1}x)\nonumber\\
      &=&f(xy^{-1}x+x).\\
      \nonumber
\end{eqnarray}
Note that if $x\in D\setminus \{0, 1, y, y+1\}$, then $x+1\in D\setminus \{0, 1, y, y+1\}$. Replacing $x$ by $x+1$ in Eq.\eqref{eq:9}, we get
\begin{eqnarray*}
f(y)&=&f((x+1)y^{-1}(x+1)+x+1)\\
      &=&f(xy^{-1}x+[x, y^{-1}]+y^{-1}+x+1).\\
\end{eqnarray*}
Since $f(y)=f(xy^{-1}x+x)$ and $f([x, y^{-1}])=0$, we get $f(y^{-1})=f(1)$.
That is, $f(y)=f(1)$ for all $y\in D^\times$. Clearly, this implies that $f=0$, as desired.
\end{proof}

\begin{thm} \label{thm1}
Assume that $f(x)=x g(x^{-1})$
for all $x\in D^{\times}$. Then $f =0$ and $g = 0$.
\end{thm}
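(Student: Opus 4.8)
The plan is to invoke Lemma~\ref{lem3} to reduce to the case $f=g$, so that $f(x)=xf(x^{-1})$ for all $x\in D^{\times}$, and then to exploit the drastic simplification that the coefficient in Lemma~\ref{lem4} undergoes when $n=1$ in characteristic $2$. Setting $n=1$ in Eq.~\eqref{eq:1} makes the bracket $(1+ab)+(ab)+1$ vanish, so that $f(a^2b)=af(b)$ for all commuting $a,b\in D$; in particular, taking $b=1$ yields the strong relation $f(a^2)=af(1)$ for every $a\in D$. This relation, peculiar to the exponent $n=1$, is the engine of the whole argument.

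First I would linearize $f(a^2)=af(1)$ by replacing $a$ with $a+b$ and using $(a+b)^2=a^2+ab+ba+b^2$; additivity of $f$ collapses everything to $f(ab)+f(ba)=0$, that is, $f(ab)=f(ba)$ for all $a,b\in D$. Next I would pin down $f(1)$: applying $f(y^2)=yf(1)$ to $y=ab$ and to $y=ba$, and noting that $f((ab)^2)=f(abab)=f(baba)=f((ba)^2)$ by the cyclic property just established, gives $(ab+ba)f(1)=0$ for all $a,b$. Since $D$ is noncommutative, some $a,b$ fail to commute, and as $D$ is a division ring this forces $f(1)=0$; hence $f(a^2)=0$ for every $a\in D$.

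With $f(a^2)=0$ in hand, the argument parallels the opening of Lemma~\ref{lem5} but avoids the GPI machinery, precisely because $n=1$. For $a,b\in D^{\times}$ the defining identity gives $f(ab)=ab\,f((ab)^{-1})$ and $f(ba)=ba\,f((ba)^{-1})$, while $(ab)^{-1}=b^{-1}a^{-1}$, $(ba)^{-1}=a^{-1}b^{-1}$, together with $f(xy)=f(yx)$, show $f((ab)^{-1})=f((ba)^{-1})$. Comparing the two expressions via $f(ab)=f(ba)$ gives $(ab+ba)f((ab)^{-1})=0$, so whenever $f(ab)\ne0$ the element $f((ab)^{-1})$ is nonzero, hence invertible, and $ab=ba$. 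Therefore, if $f(u)\ne0$ for some $u\in D^{\times}$, then writing $u=a(a^{-1}u)$ for an arbitrary $a\in D^{\times}$ forces $au=ua$, so $u\in Z(D)$; in other words, $f$ vanishes off the center. Finally, for a central $z$ I would choose a noncentral $a$ (possible as $D$ is noncommutative), observe that $z+a$ is then noncentral, and use $f(z)=f(z+a)+f(a)=0$. Thus $f$ vanishes on all of $D$, giving $f=g=0$.

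The step I expect to be the main obstacle is forcing $f(1)=0$: the collapse of Lemma~\ref{lem4} hands over $f(a^2)=af(1)$ cheaply, but upgrading this to $f(a^2)=0$ genuinely requires first extracting the cyclic identity $f(ab)=f(ba)$ and then exploiting the square-of-a-product comparison, and it is exactly here that the noncommutativity of $D$ is indispensable. Once $f(a^2)=0$ is secured, the factorization argument that pushes the support of $f$ into $Z(D)$ and then kills it is routine.
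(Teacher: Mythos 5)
Your proposal is correct and follows essentially the same route as the paper: reduce to $f=g$ via Lemma~\ref{lem3}, use Eq.~\eqref{eq:1} with $n=1$ to get $f(a^2)=af(1)$ and hence $f(ab)=f(ba)$, compare $f(ab)=ab\,f((ab)^{-1})$ with $f(ba)=ba\,f((ba)^{-1})$ to conclude that $f$ kills every product of non-commuting elements, and then push this to $f=0$. Your intermediate step forcing $f(1)=0$ is correct but superfluous (the rest of your argument only uses $f(xy)=f(yx)$), and your factorization $u=a(a^{-1}u)$ is a clean, equivalent variant of the paper's $a(1+b)$ trick.
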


\begin{proof}
  According to Lemma \ref{lem3}, we may assume that $f = g$, and so it follows from Eq.\eqref{eq:1} that $f(a^2) = af(1)$ for all $a \in D$, implying $f([a,b]) = 0$ for all $a,b \in D$. By the definition of $f$, for any $a, b \in D^\times$, we have
  $$
  f(ab) = abf((ab)^{-1})\ \ \text{\rm and}\ \ f(ba) = baf((ba)^{-1}).
  $$

 Note that $ f(ab) = f(ba)$ and $f((ab)^{-1})=f((ba)^{-1})$.
  If $f(ab) \ne 0$, then $ab=ba$.
  That is, if $[a, b]\ne 0$ then $f(ab) = 0$.

  Let $a \notin Z(D)$. We claim that $f(a)=0$. Indeed, $[a, b]\ne 0$ for some $b \in D$ and hence $[a, 1+b]\ne 0$. Thus
  $$
  0 = f(a(1+b)) = f(a) + f(ab) = f(a).
  $$

Let $a \in Z(D)$. We choose $b \in D\setminus Z(D)$ and so $a+b \notin Z(D)$, implying
  $$
  0 = f(a+b) = f(a)+f(b) = f(a).
  $$
Therefore $f=0$, as desired.
\end{proof}

\begin{remark}\label{remark1}
{\rm The case $n = 1$ (i.e., the identity $f(x)=x g(x^{-1})$
for all $x\in D^{\times}$) has been described by Ferreira et al. if $\text{\rm char}\,D\ne 2$ (see \cite[Theorem 1.1]{ferreira2023}).
In fact, given $x\in D^{\times}$, $f(2x)=2x g((2x)^{-1})=x g(x^{-1})=f(x)$ and so $f(x)=0$, as desired.}
\end{remark}

The following characterizes the case $n = 2$.

\begin{thm} \label{thm2}
Assume that $f(x)=x^2 g(x^{-1})$
for all $x\in D^{\times}$. Then $f = g$ and $f(x)=xf(1)$ for all $x\in D$.
\end{thm}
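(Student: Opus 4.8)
The plan is to reduce to the case $f=g$ via Lemma \ref{lem3}, so that we work with a single additive map $f$ satisfying $f(x)=x^2f(x^{-1})$ for all $x\in D^{\times}$. With $n=2$, the functional identity of Lemma \ref{lem4} simplifies dramatically: since $\text{\rm char}\,D=2$, we have $(1+ab)^2+(ab)^2+1 = 1+(ab)^2+(ab)^2+1 = 0$, so Eq.\eqref{eq:1} collapses to
\begin{eqnarray*}
f(a^2b) = a^2 f(b)
\end{eqnarray*}
for all commuting $a,b\in D$. Taking $b=1$ gives $f(a^2)=a^2 f(1)$ for every $a\in D$, which already pins down $f$ on all squares.

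The main task is then to propagate the relation $f(a^2)=a^2f(1)$ from squares to all of $D$. First I would note that squares additively generate $D$: indeed, for any $x$, the identity $x = (x)^2 + \text{(lower-order correction)}$ is not directly available, but in characteristic $2$ one has $a^2+b^2 = (a+b)^2 - ab - ba$, hence $ab+ba = (a+b)^2+a^2+b^2$ is a sum of squares, so every symmetric combination $ab+ba$ lies in the additive span of squares. The hope is to show that the additive subgroup generated by squares is all of $D$, or at least large enough; this is where a Chuang-type result on additive subgroups generated by the image of a polynomial (as used in Theorem \ref{thm7}) or a direct commutator argument should enter.

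A cleaner route avoids generating all of $D$ by squares. Since $f([a,b])=f(ab+ba)=0$ follows from $f(a^2b)=a^2f(b)$ applied symmetrically (or directly from the square relation), I would establish that $f$ vanishes on all additive commutators, i.e. $f(xy)=f(yx)$ for all $x,y\in D$. Combining this trace-like property with the definition $f(x)=x^2f(x^{-1})$ and a Hua-identity computation analogous to Eq.\eqref{eq:9} in the proof of Theorem \ref{thm7}, I expect to derive $f(x)=xf(1)$ for all $x\in D^{\times}$; the key manipulation is to express a general element $y$ through inverses and commutators so that the square relation and the commutator-vanishing can be applied in tandem. Once $f(x)=xf(1)$ holds on $D^{\times}$, additivity and $f(0)=0$ extend it to all of $D$.

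The hard part will be the passage from $f(a^2)=a^2f(1)$ (valid only on squares) to the linear formula $f(x)=xf(1)$ on arbitrary $x$. The obstruction is that squares need not additively span $D$ a priori, and the naive substitution arguments that worked for $n=0,1$ must be redone with the $n=2$ relation, which is degenerate in characteristic $2$. I anticipate needing either the Chuang additive-subgroup theorem \cite{chuang1987} to force $[D,D]\subseteq A$ for the relevant subgroup $A$, together with a Posner/PI-theory dichotomy (as in Theorem \ref{thm7}) to rule out the exceptional central case, or a careful Hua-identity computation that directly yields the value of $f$ on each element. Verifying $f=g$ at the end is immediate from Lemma \ref{lem3} since $f(1)=g(1)$.
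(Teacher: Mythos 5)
Your opening is exactly the paper's: reduce to $f=g$ by Lemma \ref{lem3}, observe that for $n=2$ in characteristic $2$ the coefficient $(1+ab)^2+(ab)^2+1$ in Eq.\eqref{eq:1} vanishes, and conclude $f(a^2b)=a^2f(b)$ for commuting $a,b$, hence $f(a^2)=a^2f(1)$. After that, however, there are two problems. First, a concrete error: you assert that $f([a,b])=f(ab+ba)=0$. This is false. From $ab+ba=(a+b)^2+a^2+b^2$ and $f(c^2)=c^2f(1)$ one gets $f([a,b])=[a,b]\,f(1)$, which is nonzero in general --- indeed $f(x)=xq$ with $q=f(1)$ \emph{is} a solution of $f(x)=x^2f(x^{-1})$, and for it $f([a,b])=[a,b]q\ne 0$. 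The commutator-vanishing you want only holds after the normalization the paper performs: set $\widetilde{f}(x)=f(x)+xf(1)$, check that $\widetilde{f}$ still satisfies $\widetilde{f}(x)=x^2\widetilde{f}(x^{-1})$, and note $\widetilde{f}(a^2)=0$. Your proposal never makes this reduction, so the ``trace-like property'' you build on is not available for $f$ itself.

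Second, and more seriously, the step you yourself identify as the hard part --- passing from vanishing on squares (equivalently, on commutators) to vanishing identically --- is not carried out; you only express the expectation that a Hua-identity computation ``analogous to Eq.\eqref{eq:9}'' or Chuang's theorem will finish it. That is precisely where the paper's real work lies, and it is done by Lemma \ref{lem5}, not by either of the routes you sketch: from $\widetilde{f}(xy)=\widetilde{f}(yx)$ and $\widetilde{f}(u)=u^2\widetilde{f}(u^{-1})$ one deduces that $\widetilde{f}(ab)\ne 0$ forces $[a,(ab)^2]=0$, then a case analysis yields the identity $[a,(ax)^2][a,(a(b+x))^2]=0$, which is a nontrivial GPI; Martindale's theorem, Wedderburn's theorem and a Vandermonde argument then give a contradiction unless $\widetilde{f}=0$. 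The computation of Eq.\eqref{eq:9} does not transfer to $n=2$ because the identity there is $f(u)=f(u^{-1})$ with no weight, whereas here every inversion introduces a factor $u^{2}$ that breaks the telescoping. So the proposal has the right first page but is missing the decisive lemma.
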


\begin{proof}
 In view of Lemma \ref{lem3}, we may assume that $f=g$. According to Eq.\eqref{eq:1},
  \begin{eqnarray*}
    f(a^2b) &=& ((1+ab)^2 +(ab)^2 +1 ) f(a) + a^2 f(b)= a^2 f(b),
  \end{eqnarray*}
  for all $a,b \in D$ with $ab=ba$. In particular, $f(a^2)=a^2f(1)$ for all $a\in D$.
  Define $\widetilde{f}\colon D \to D$ by $\widetilde{f}(x) = f(x)-xf(1)$ for $x\in D$. Then $\widetilde{f}$ is an additive map satisfying
  $$
  \widetilde{f}(x)=x^2 \widetilde{f}(x^{-1})
  $$
  for all $x\in D$. Also, $\widetilde{f}(a^2) = 0$ for all $a\in D$. It follows from Lemma \ref{lem5} that $\widetilde{f} = 0$. Hence $f(x)=xf(1)$ for all $x\in D$.
\end{proof}

\section{Theorem \ref{thm6} with $n>2$}

Before giving the proof of Theorem \ref{thm6}, we always assume that\vskip6pt

{\it $D$ is a noncommutative division ring of characteristic $2$, and $f \colon D \to D$ is an additive map satisfying $f(x)=x^n f(x^{-1})$ for all $x\in D^{\times}$, where $n>2$.}\vskip6pt

 By Eq.\eqref{eq:1}, we have
\begin{eqnarray} \label{eq:2}
  f(a^2) = ((1+a)^n +a^n +1 ) f(a) + a^n f(1) = w(a)f(a) + a^n f(1)
\end{eqnarray}
for all $a\in D$, where $w(X) := (1+X)^n + X^n +1$. Then
\begin{eqnarray} \label{eq:3}
&&f([a,b]) \nonumber \\
&=& f((a+b)^2)+f(a^2)+f(b^2) \nonumber \\
&=& (w(a+b)+ w(a))f(a) + (w(a+b)+ w(b))f(b) + p(a,b)f(1)
\end{eqnarray}
for all $a,b\in D$, where $p(X,Y) := (X+Y)^n + X^n + Y^n \in D\{X,Y\}\setminus \{0\}$.

\begin{lem} \label{lem1}
  If $n$ is a positive power of $2$, then $f = 0$.
\end{lem}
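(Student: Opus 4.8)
The plan is to turn the hypothesis ``$n$ is a power of $2$'' into the vanishing of the polynomial $w$. Write $n=2^k$; since the standing assumption here is $n>2$, we have $k\ge 2$. Because $1$ and $a$ commute, the characteristic-$2$ Frobenius gives $(1+a)^n=(1+a)^{2^k}=1+a^{2^k}=1+a^n$, so that $w(a)=(1+a)^n+a^n+1=0$ for every $a\in D$. Feeding this into Eq.\eqref{eq:2} and Eq.\eqref{eq:3} collapses them to
\[
 f(a^2)=a^nf(1)\qquad\text{and}\qquad f([a,b])=p(a,b)f(1),
\]
with $p(a,b)=(a+b)^n+a^n+b^n$. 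Thus the whole statement reduces to showing $f(1)=0$: granting that, $f(a^2)=a^nf(1)=0$ for all $a$, and Lemma~\ref{lem5} gives $f=0$.

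To prove $f(1)=0$ I would argue by contradiction, assuming $f(1)\ne 0$. Since $f$ is additive and $[a,b]$ is additive in each slot, $(a,b)\mapsto f([a,b])=p(a,b)f(1)$ is biadditive; replacing $b$ by $b+b'$ in the second identity therefore yields the generalized polynomial identity $H(a,b,b')f(1)=0$ on $D$, where $H=p(a,b+b')+p(a,b)+p(a,b')$. The point is that $H$ is a nonzero element of $D\{a,b,b'\}$: the homogeneous components of degree $n$ and $n-1$ in $a$ cancel by the same char-$2$ bookkeeping, while the degree-$(n-2)$ component is
\[
 S(a,b,b')=\sum_{i+j+k=n-2}\bigl(a^iba^jb'a^k+a^ib'a^jba^k\bigr),
\]
which cannot vanish (e.g. the monomial $ba^{n-2}b'$ survives). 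Hence $Hf(1)$ is a nontrivial GPI, so by Theorem~\ref{thm8} the ring $D$ is finite-dimensional over $Z(D)$, and, being noncommutative, $|Z(D)|=\infty$ by Wedderburn's theorem.

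Now the center is infinite, so I can linearize: substituting $\lambda a$ for $a$ ($\lambda\in Z(D)$) into $H(a,b,b')f(1)=0$ and reading off the coefficient of $\lambda^{\,n-2}$ via Lemma~\ref{lem6} gives $S(a,b,b')f(1)=0$ for all $a,b,b'$. Specializing $b'=a$ makes $S$ collapse to $(n-1)\sum_{i=0}^{n-1}a^iba^{n-1-i}$; as $n-1$ is odd and $f(1)\ne 0$, this forces $\sum_{i=0}^{n-1}a^iba^{n-1-i}=0$ for all $a,b$. Writing $L_a,R_a$ for left/right multiplication, the telescoping identity $\bigl(\sum_{i=0}^{n-1}L_a^iR_a^{n-1-i}\bigr)(L_a-R_a)=L_a^n-R_a^n=(L_a-R_a)^n=\operatorname{ad}_a^{\,n}$ (the middle equality being the char-$2$ Frobenius, valid since $n=2^k$ and $L_a,R_a$ commute) then yields $\operatorname{ad}_a^{\,n}=0$. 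But $\operatorname{ad}_a^{\,n}=\operatorname{ad}_{a^n}$ by the same Frobenius, so $a^n\in Z(D)$ for every $a\in D$.

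Finally I would rule this out. A finite-dimensional central division algebra always possesses a separable maximal subfield $K$; yet $a^n\in Z(D)$ with $n$ a $2$-power makes every element of $K$ purely inseparable over $Z(D)$, so $K=Z(D)$ and $D=Z(D)$ is commutative, contradicting our hypothesis. Hence $f(1)=0$ and $f=0$. The main obstacle is the linearization step: the naive specialization $b'=a$ in $Hf(1)=0$ collapses to the trivial $0=0$, so one must first extract the top ($a$-degree $n-2$) homogeneous part $S$ before specializing, which is precisely why the detour through GPI-theory (to force $|Z(D)|=\infty$ and thereby enable the Vandermonde argument of Lemma~\ref{lem6}) is needed; recognizing the surviving sum as an iterated inner derivation and handling the genuinely char-$2$ pure-inseparability at the end are the remaining delicate points.
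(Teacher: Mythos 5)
Your proof is correct, and while it follows the same overall strategy as the paper (reduce to showing $f(1)=0$, then argue by contradiction through PI/GPI theory, a central Vandermonde step, and a power-centrality contradiction, finishing with Lemma \ref{lem5}), the technical devices differ at each stage. To manufacture the identity you linearize $f([a,b])=p(a,b)f(1)$ in $b$, obtaining the GPI $\bigl(p(a,b+b')+p(a,b)+p(a,b')\bigr)f(1)=0$ and invoking Martindale's theorem (Theorem \ref{thm8}); the paper instead feeds the identity $[xy,z]+[yz,x]+[zx,y]=0$ into $f$ to get the coefficient-free PI $p(xy,z)+p(yz,x)+p(zx,y)=0$ and uses Posner's theorem. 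Your linearization is arguably more direct, since nonvanishing of your $H$ is immediate from the surviving multidegree-$(n-2,1,1)$ component, whereas the paper must isolate the term $(XY)^{n-1}Z$. Your Vandermonde extraction of $S(a,b,b')$, the specialization $b'=a$, and the telescoping $\bigl(\sum_i L_a^iR_a^{n-1-i}\bigr)(L_a+R_a)=L_a^n+R_a^n$ giving $\operatorname{ad}_{a^n}=0$ are the exact analogue of the paper's extraction of $P_{1,n-1}(c,a)=0$ and the identity $[a^n,c]=[a,P_{1,n-1}(c,a)]$. At the end you replace the paper's appeal to Kaplansky's theorem by the Noether--Jacobson/K\"othe fact that a finite-dimensional central division algebra has a separable maximal subfield, which combined with pure inseparability ($a^{2^k}\in Z(D)$) forces commutativity; this is a valid, self-contained alternative in the finite-dimensional setting, though it trades one classical citation for another. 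One cosmetic point in your favour: the paper's final line cites Lemma \ref{lem6} where Lemma \ref{lem5} is clearly intended, and you invoke the correct lemma.
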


\begin{proof}
 Clearly, $w(X) = 0$ in this case. It follows from Eq.\eqref{eq:2} and Eq.\eqref{eq:3} that
 $$
 f(a^2)=a^nf(1)\ \ \text{\rm and}\ \ f([a,b])= p(a,b)f(1)
 $$
  for all $a, b\in D$.
 Let $x,y,z\in D$. Note that
 $ [xy, z]+[yz, x]+[zx, y]=0$.
  We have
  \begin{eqnarray*}
    0 &=& f([xy, z]) + f([yz, x]) + f([zx, y]) \\
      &=& \big( p(xy, z) + p(yz, x) + p(zx, y) \big)f(1),
  \end{eqnarray*}
  implying either $f(1) = 0$ or $p(xy, z) + p(yz, x) + p(zx, y) = 0$ for all $x,y,z\in D$. We claim that $f(1) = 0$. Suppose on the contrary that $f(1) \ne 0$. Then
  $$
  p(xy, z) + p(yz, x) + p(zx, y) = 0
  $$
  for all $x,y,z\in D$. Thus
  \begin{eqnarray*}
 && p(XY, Z) + p(YZ, X) + p(ZX, Y)\\
 &=& (XY+Z)^n + (YZ+X)^n + (ZX+Y)^n \\
                                  &&+ (XY)^n + (YZ)^n + (ZX)^n + Z^n + X^n + Y^n
  \end{eqnarray*}
  is a nontrivial PI for $D$ since $n>2$ and it contains a nonzero term $(XY)^{n-1}Z$, implying $D$ is finite-dimensional over $Z(D)$ by Posner's theorem \cite{posner1960}. Since $D$ is noncommutative, it follows from Wedderburn's theorem \cite{wedderburn1905} that $|Z(D)|=\infty$. Recall that, for
$a, c\in D$, we have
  $$
  f([a, c])=p(a, c)f(1)=\big((a+c)^n+a^n+c^n\big)f(1).
  $$
 Let $\beta\in Z(D)$. Then $f([\beta a, c])=f([a, \beta c])$ and so
 $$
  (\beta a+c)^n+(\beta a)^n+c^n=(a+\beta c)^n+a^n+(\beta c)^n.
  $$
  Denote by $P_{i, j}(X, Y)$ the sum of all monic monomials with $X$  degree $i$ and $Y$ degree $j$.
  Then
  $$
  \sum_{i=1}^{n-1}P_{i, n-i}(c,a)\beta^{n-i} = \sum_{i=1}^{n-1}P_{i, n-i}(a,c)\beta^{n-i}.
  $$
Because $Z(D)$ is infinite, it follows from Lemma \ref{lem6} that
  $$
  P_{1, n-1}(c, a) = P_{1, n-1}(a, c).
  $$
Since $n>2$, applying Lemma \ref{lem6} once again we get $P_{1, n-1}(c, a)=0$ for all $a, c\in D$. So
  $$
  [a^n, c]=[a, P_{1, n-1}(c, a)]=0
  $$  for all $a, c\in D$.
  It follows from Kaplansky's theorem \cite[Theorem]{kaplansky1951} that $D$ is commutative, a contradiction.
  Hence $f(1) = 0$, implying $f(a^2) = 0$ for all $a \in D$. By Lemma \ref{lem6}, we have $f = 0$.
\end{proof}

A polynomial $P(X) \in K[X]$, where $K$ is a field, is called an additive polynomial if
$$
P(\alpha + \beta) = P(\alpha) + P(\beta)
$$
for all $\alpha, \beta \in K$. The following plays a key role in the proof below (see \cite[Proposition 1.1.5]{goss1996}).

\begin{pro}\label{pro1}
Let $K$ be an infinite field of characteristic $p>0$, and let $P(X) \in K[X]$ be an additive polynomial. Then $P(x)$ lies in the $K$-subspace of $K[X]$ generated by $X^{p^i}$ for $i=0, 1,\ldots$.
\end{pro}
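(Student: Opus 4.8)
The plan is to upgrade the pointwise additivity of $P$ into a formal identity in $K[X,Y]$ and then extract the allowed exponents from the divisibility of binomial coefficients modulo $p$. First I would form
\[
Q(X,Y):=P(X+Y)-P(X)-P(Y)\in K[X,Y].
\]
By hypothesis $Q(\alpha,\beta)=0$ for all $\alpha,\beta\in K$. Since $K$ is infinite, a two-variable polynomial vanishing on all of $K\times K$ must be the zero polynomial (fix one variable at infinitely many values and apply the one-variable statement in the other). Hence $Q=0$ identically, that is,
\[
P(X+Y)=P(X)+P(Y)\quad\text{in }K[X,Y].
\]
This is the only place where the infiniteness of $K$ is used.

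Next I would write $P(X)=\sum_{k\ge 0}c_kX^k$ and expand, getting $P(X+Y)=\sum_k c_k\sum_{j=0}^k\binom{k}{j}X^jY^{k-j}$, while $P(X)+P(Y)=\sum_k c_k(X^k+Y^k)$. Comparing the coefficient of each mixed monomial $X^jY^{k-j}$ with $0<j<k$ forces $c_k\binom{k}{j}=0$ in $K$, and comparing constant terms gives $c_0=2c_0$, hence $c_0=0$. Because $\text{\rm char}\,K=p$, the relation $c_k\binom{k}{j}=0$ means that whenever $c_k\ne 0$ one has $\binom{k}{j}\equiv 0\pmod p$ for every $0<j<k$. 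The task thus reduces to deciding for which degrees $k$ all these intermediate binomial coefficients vanish modulo $p$.

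This last point is the only non-formal input, and I expect it to be the main obstacle: one must show that $\binom{k}{j}\equiv 0\pmod p$ for all $0<j<k$ holds precisely when $k$ is a power of $p$ (allowing $p^0=1$). I would argue in $\mathbb{F}_p[X]$, where the congruences say exactly that $(1+X)^k=1+X^k$. Writing $k=p^mq$ with $p\nmid q$, the identity $(1+X)^{p^m}=1+X^{p^m}$ gives $(1+X)^k=(1+X^{p^m})^q$; setting $Y=X^{p^m}$ reduces the equation to $(1+Y)^q=1+Y^q$ in $\mathbb{F}_p[Y]$, whose coefficient of $Y$ is $q$. Since $q\not\equiv 0\pmod p$, this forces $q=1$, i.e.\ $k=p^m$ (alternatively, this is immediate from Lucas' theorem applied to the base-$p$ digits of $k$). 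Combining with the previous step, $c_k\ne 0$ only for $k=p^i$, so $P(X)=\sum_{i\ge 0}c_{p^i}X^{p^i}$ lies in the $K$-subspace spanned by the monomials $X^{p^i}$, as required.
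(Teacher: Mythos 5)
Your proof is correct. Note that the paper does not prove this proposition at all: it is quoted verbatim from Goss's book (Proposition 1.1.5 of the cited reference), so there is no in-paper argument to compare against. Your argument is the standard self-contained one: pass from pointwise additivity to the formal identity $P(X+Y)=P(X)+P(Y)$ in $K[X,Y]$ using infiniteness of $K$, then kill all mixed binomial coefficients and characterise the surviving degrees as powers of $p$ via $(1+X)^k=1+X^k$ in $\mathbb{F}_p[X]$ (equivalently, Lucas). All steps check out, including the often-overlooked elimination of the constant term $c_0$. For what it is worth, the proof in Goss reaches the same conclusion by a slightly different device: differentiating the formal identity with respect to $X$ shows $P'$ is constant, so $P(X)-P'(0)X$ is a polynomial in $X^p$, and one concludes by induction on the degree; your binomial-coefficient route avoids the induction at the cost of the (easy) combinatorial lemma on when all intermediate $\binom{k}{j}$ vanish mod $p$. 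Either way the result stands, and your write-up is complete.
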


 A map $f$ from $D$ into itself is called an elementary operator if there exist finitely
 many $a_i, b_i\in D$ such that $f(x)=\sum_{i}a_ixb_i$ for all $x\in D$.

\begin{lem} \label{lem2}
  If $n$ is not a positive power of $2$, then $f = 0$.
\end{lem}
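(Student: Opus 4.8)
The plan is to compress the two-variable information in \eqref{eq:1}--\eqref{eq:3} into a \emph{single}-variable identity coupling $f(a)$ to $f(1)$ through polynomial coefficients, and then to play that identity against additivity. Since $n$ is not a power of $2$, Lucas' theorem shows that $w(X)=(1+X)^n+X^n+1=\sum_{k=1}^{n-1}\binom{n}{k}X^k$ is a nonzero element of $\mathbb{F}_2[X]$; this nonvanishing is what drives the whole argument, so I would record it first. Note also that then $Q(X):=X^n w(X^2)\neq 0$.

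Next I would harvest the needed relations from \eqref{eq:1}. Taking $b=a$ in \eqref{eq:1} gives $f(a^3)=\bigl(w(a^2)+a^n\bigr)f(a)$, while computing $f(a^4)$ in the two ways $f((a^2)^2)$ (via \eqref{eq:2}) and $f(a^2\cdot a^2)$ (via \eqref{eq:1} applied to the commuting pair $(a,a^2)$, giving $w(a^3)f(a)+a^nf(a^2)$) and then substituting \eqref{eq:2} for $f(a^2)$ makes all intermediate values cancel, leaving
\begin{equation*}
P(a)\,f(a)=Q(a)\,f(1)\qquad(a\in D),\qquad P(X)=w(X^3)+w(X)\bigl(X^n+w(X^2)\bigr).
\end{equation*}
The first genuine obstacle is to prove $P\neq 0$ in $\mathbb{F}_2[X]$. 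I expect this to reduce to a Lucas-type binomial computation that is exactly where ``$n$ not a power of $2$'' is used; the delicate point is that the top-degree terms of the three summands can cancel (e.g. when $n$ is odd), so rather than the leading coefficient I would isolate a surviving lower-degree coefficient of $P$.

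With $P(a)f(a)=Q(a)f(1)$ in hand I would split on $f(1)$. Suppose $f(1)\neq 0$. Replacing $a$ by $a+1$ (so $f(a+1)=f(a)+f(1)$, and $P,Q\in\mathbb{F}_2[X]$) gives $P(a+1)f(a)=\bigl(Q(a+1)+P(a+1)\bigr)f(1)$; eliminating $f(a)$ between the two relations and using that $z\,f(1)=0$ forces $z=0$ in a division ring, I obtain $R(a)=0$ for all $a\in D$, where $R(X)=P(X+1)Q(X)+P(X)Q(X+1)+P(X)P(X+1)$. If $R\neq 0$, then $D$ satisfies a nonzero one-variable polynomial identity, so by Posner's theorem \cite{posner1960} (with Kaplansky's theorem \cite{kaplansky1951}) $D$ is finite-dimensional over $Z(D)$, whence $|Z(D)|=\infty$ by Wedderburn's theorem \cite{wedderburn1905}; but every element of the infinite field $Z(D)$ would then be a root of $R$, a contradiction. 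In the residual case $R\equiv 0$, I would use instead that $f(a)=P(a)^{-1}Q(a)f(1)$ wherever $P(a)\neq 0$, so additivity turns $r:=Q/P$ into an additive rational function of one variable on $D$; this is a nontrivial generalized identity, forcing $D$ finite-dimensional over an infinite centre (Theorem~\ref{thm8} and \cite{wedderburn1905}), and restricting to $Z(D)$ and clearing denominators yields an honest polynomial identity over the infinite field $Z(D)$. Proposition~\ref{pro1} then forces $Q/P=\sum_i c_iX^{2^i}$, which I expect to be incompatible with the explicit shapes of $P$ and $Q$ precisely because $n$ is not a power of $2$. Either way $f(1)=0$.

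Finally, with $f(1)=0$ the identity collapses to $P(a)f(a)=0$ for all $a$, and \eqref{eq:2} becomes $f(a^2)=w(a)f(a)$; here the goal is to reach the hypothesis $f(a^2)=0$ of Lemma~\ref{lem5}, after which $f=0$ follows at once. Applying $a\mapsto a+1$ once more gives $P(a+1)f(a)=0$, hence $g(a)f(a)=0$ with $g=\gcd\bigl(P(X),P(X+1)\bigr)$; iterating this $\gcd$-reduction lowers the degree of the annihilating polynomial until it becomes constant, forcing $f(a)=0$ for all $a$, and any non-constant stable remainder (necessarily lying in $\mathbb{F}_2[X^2+X]$) is eliminated as in the previous paragraph via Proposition~\ref{pro1} and the nonvanishing of $w$. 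The two steps I expect to be the real work are establishing $P\neq 0$ and carrying out the additive-polynomial rigidity; the rest is routine manipulation of \eqref{eq:1}--\eqref{eq:3}, the Vandermonde device (Lemma~\ref{lem6}), and the PI/GPI dichotomy.
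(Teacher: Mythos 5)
Your strategy hinges on the polynomial $P(X)=w(X^3)+w(X)\bigl(X^n+w(X^2)\bigr)$ being nonzero in $\mathbb{F}_2[X]$, and you defer exactly this point as a ``Lucas-type computation''. Unfortunately $P$ vanishes identically for infinitely many admissible $n$, beginning with $n=3$: there $w(Y)=Y+Y^2$, so $w(X^3)=X^3+X^6$ while $w(X)\bigl(X^3+w(X^2)\bigr)=(X+X^2)(X^2+X^3+X^4)=X^3+X^6$, whence $P=0$. More generally, since $w$ has coefficients in $\mathbb{F}_2$ one has $w(X^2)=w(X)^2$, so $P=w(X^3)+X^nw(X)+w(X)^3$, and a direct expansion shows this is identically zero whenever $w(X)=X^{2^a}+X^{2^b}$, i.e.\ (by Lucas) whenever $n=2^a+2^b$ is a sum of two distinct powers of $2$ ($n=3,5,6,9,10,12,\dots$). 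These are precisely the delicate values of $n$ — the ones for which the one-variable restriction of the functional equation admits nontrivial solutions on fields (cf.\ \cite[Example 4.3]{lee2024}) — so no identity $P(a)f(a)=Q(a)f(1)$ with $P\ne 0$ obtained purely from the single-variable consequences \eqref{eq:1}--\eqref{eq:2} can be expected there. When $P=0$ your master identity degenerates to $Q(a)f(1)=0$, which at best yields $f(1)=0$ and says nothing about $f(a)$; the concluding step (reaching $f(a^2)=0$ and invoking Lemma~\ref{lem5}) then has nothing to work with. The obstruction is structural, not computational: one-variable manipulations take place inside commutative subrings and cannot exploit the noncommutativity of $D$, which is the only thing distinguishing this theorem from the false statement over fields.

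This is exactly where the paper's proof differs: it feeds the two-variable expansion \eqref{eq:3} into the Jacobi identity $[[x,y],z]+[[y,z],x]+[[z,x],y]=0$ to produce a genuinely multilinear functional identity $F\,f(x)+G\,f(y)+H\,f(z)+q=0$ with $F\ne 0$, applies the functional-identity machinery of \cite[Corollary 2.4]{lee2024} to force $f$ to be an elementary operator or $D$ to be finite-dimensional, and only then uses Proposition~\ref{pro1} together with an explicit matrix-unit evaluation to dispose of the residual case $n=2^{s+t}+2^s$. Secondary issues in your sketch (an additive \emph{rational} function $Q/P$ is not covered by Proposition~\ref{pro1}; $g(a)=0$ only on the support of $f$ is not yet a polynomial identity for $D$) would also need repair, but they are moot given the vanishing of $P$.
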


\begin{proof}
 We write $n = 2^s k$, where $s \geq 0$ and $k > 1$ is odd.
  Then $w(X) \ne 0$. Applying Eq.\eqref{eq:3} and the Jacobi's identity, we expand the equation
  $$
  f([[x, y], z]) + f([[y, z], x]) + f([[z, x], y]) = 0
  $$
  to get an identity of the form
  \begin{eqnarray*}
  F(x, y, z)f(x)+G(x, y, z)f(y)+H(x, y, z)f(z)+ q(x,y,z) =0
  \end{eqnarray*}
  for all $x, y, z\in D$, where
  \begin{eqnarray*}
  F(X, Y, Z) &:=& \big(w([X, Y]+Z)+w([X, Y])\big)\big(w(X+Y)+w(X)\big)\\
               &&+ \big(w([Y, Z]+X)+w(X)\big)\\
               &&+\big(w([Z, X]+Y)+w([Z, X])\big)\big(w(Z+X)+w(X)\big)
  \end{eqnarray*}
  and $G(X, Y, Z), H(X,Y,Z), q(X,Y,Z) \in D\{X,Y,Z\}$.
  Note that $F(X, Y, Z)$ is nonzero in $D\{X,Y,Z\}$ since it contains the nonzero term $Z^{2^s (k-1)}Y^{2^s (k-1)}$.
  It follows from \cite[Corollary 2.4]{lee2024} that either $D$ is finite-dimensional over $Z(D)$ or $f$ is an elementary operator.
  We first claim that either $f=0$ or $D$ must be finite-dimensional over $Z(D)$.

Assume that $f$ is a nonzero elementary operator. There exist finitely many $c_j, d_j\in D$, $j=1,\ldots,s$, such that $f(x)=\sum_{j=1}^sc_jxd_j$ for all $x\in D$. We can choose $s$ minimal. So both $\{c_1,\ldots,c_s\}$ and $\{d_1,\ldots,d_s\}$ are $Z(D)$-independent.

 Case 1:\ $f(1) = 0$. Then, by Eq.\eqref{eq:2}, we get
  $$
  \sum_jc_jx^2d_j=\sum_j((1+x^{2^s})^k+x^{{2^s}k}+1)c_jxd_j
  $$
  for all $x\in D$, implying $D$ is a division GPI-algebra since $2^s(k-1)>1$. It follows from Theorem \ref{thm8} that $D$ is finite-dimensional over $Z(D)$.

 Case 2:\ $f(1) \ne 0$. Then, by \cite[Lemma 4.1(a)]{catalano2024}, we have
  $$
  ((1+x)^n + x^n +1)f(1) = 0
  $$
  for all $x\in D$, implying that
  $$
  (1+X)^n + X^n +1
  $$
  is a nontrivial PI for $D$. So it follows from Posner's theorem \cite{posner1960} that $D$ is also finite-dimensional over $Z(D)$.

Suppose on the contrary that $f\ne 0$. Then $D$ is finite-dimensional over $Z(D)$.
  Since $D$ is noncommutative, it follows from Jacobson's theorem \cite[Theorem 2, p. 183]{jacobson1964} (or see \cite[Theorem 13.11]{lam2001}) that $Z(D)$ is not algebraic over $\mathbb{Z}/2\mathbb{Z}$. In particular, $Z(D)$ is infinite.

  We next claim that $f(Z(D)) = 0$. Otherwise, let $\alpha \in Z(D)$ such that $f(\alpha) \ne 0$. By Eq.\eqref{eq:1}, we have
  $$
  f(\alpha^2 b) = ((1+\alpha b)^n +(\alpha b)^n +1 ) f(\alpha) + \alpha^n f(b)
  $$
  for all $b \in D$. Thus the map $b \mapsto (1+\alpha b)^n +(\alpha b)^n +1$ is additive, implying the nonzero polynomial
  $$
  P(X):=(1+\alpha X)^n +(\alpha X)^n +1
  $$
  is an additive polynomial in $Z(D)[X]$. Since $Z(D)$ is infinite, it follows from Proposition \ref{pro1} that $P(X)$ is a $Z(D)$-linear combination of monomials whose degrees are powers of $2$. Since $P(X)$ has a nonzero term $\alpha^{2^{\ell}(k-1)} X^{2^{\ell}(k-1)}$, $k-1$ must be a power of $2$. Write $k-1 = 2^t$ and so $n = 2^{s+t} + 2^{s}$. Then $P(X) = \alpha^{2^{s+t}} X^{2^{s+t}} + \alpha^{2^s} X^{2^s}$, and thus
  \begin{eqnarray*}
  Q(X,Y)&:=& P(X+Y)+P(X)+P(Y)\\
          &=& (\alpha^{2^{s+t}} (X+Y)^{2^{s+t}} + \alpha^{2^s}(X+Y)^{2^s}) \\
           &&+ ( \alpha^{2^{s+t}} X^{2^{s+t}} +\alpha^{2^s} X^{2^s} ) \\
           &&+ (\alpha^{2^{s+t}} Y^{2^{s+t}} +\alpha^{2^s} Y^{2^s})\\
           && \in Z(D)\{X,Y\}
  \end{eqnarray*}
  is a nontrivial PI for $D$.

  Let $F$ be a maximal subfield of $D$ containing $Z(D)$. Then $D \otimes_{Z(D)} F \cong \M_{r}(F)$, where $r = \sqrt{[D\colon Z(D)]}>1$. It is well-known that $D$ and $\M_{r}(F)$ satisfy the same PIs (see, for example, \cite[Corollary, p.64]{jacobson1975}). So $Q(X,Y)$ is also a PI for $\M_{r}(F)$. Since $Z(D)$ is not algebraic over $\mathbb{Z}/2\mathbb{Z}$, we can always choose $\beta \in Z(D)$ such that $\alpha^{2^{s+t}} \beta^{2^{s+t}} + \alpha^{2^s} \beta^{2^s} \ne 0$. However,
  $$
  Q(\beta e_{11},\beta e_{12}) = (\alpha^{2^{s+t}} \beta^{2^{s+t}} + \alpha^{2^s} \beta^{2^s} )e_{12} \ne 0,
  $$
  a contradiction. Hence $f(Z(D)) = 0$.

Let $b\in D$ with $f(b)\ne 0$. Then we have
  $$
  f(\alpha^2 b) = ((1+\alpha b)^n +(\alpha b)^n +1 ) f(\alpha) + \alpha^n f(b) = \alpha^n f(b)
  $$
  for all $\alpha \in Z(D)$. Replacing $\alpha$ by $\alpha + 1$, we have
  $$
  (\alpha+1)^n f(b) = f((\alpha+1)^2 b) = f(\alpha^2 b) + f(b) = (\alpha^n +1)f(b)
  $$
  for all $\alpha \in Z(D)$. Then $(\alpha+1)^n + \alpha^n +1 = 0$ for all $\alpha \in Z(D)$. Therefore $Z(D)$ is finite. Hence $D$ is a finite division ring, a contradiction. Hence $f = 0$, as desired.
\end{proof}\vskip4pt

\noindent {\bf Proof of Theorem \ref{thm6}.}

First, (i) follows from Theorem \ref{thm2}. Also, Theorems \ref{thm7} and \ref{thm1} show the cases $n = 0$ and $n=1$, respectively. For $n > 2$, by Lemma \ref{lem3}, we can assume that $f = g$. In this case, Lemmas \ref{lem1} and \ref{lem2} imply $f = 0$. Hence (ii) is proved.
\hfill $\square$
\vskip8pt

\section{Proof of Theorem \ref{thm9}}
In view of Theorem \ref{thm7}, we may assume $\text{\rm char}\,D\ne 2$.
Let $a,b \in D$ with $ab \ne 0,1$. Then, by Hua's identity,
\begin{eqnarray*}
	a-aba = (a^{-1}+(b^{-1}-a)^{-1})^{-1}.
\end{eqnarray*}
Thus, right-multiplying both sides by $a^{-1}$, we get
\begin{eqnarray*}
 1-ab&=& \big(1+a(b^{-1}-a)^{-1}\big)^{-1}.
\end{eqnarray*}
It means that
\begin{eqnarray*}
	(1-ab)^{-1}&=& 1+a(b^{-1}-a)^{-1}= 1+\big((ab)^{-1}-1\big)^{-1}
\end{eqnarray*}
and so
\begin{eqnarray*}
	1-ab&=& \big(1+((ab)^{-1}-1)^{-1}\big)^{-1}.
\end{eqnarray*}
Note that $f(1)=g(1)$. Thus
\begin{eqnarray*}
  f(1-ab) &=&g(1+((ab)^{-1}-1)^{-1}) \\
          &=&  g(1) +f((ab)^{-1}-1) \\
          &=& g(1) +g(ab) - f(1)\\
          &=& g(ab).
\end{eqnarray*}
Thus $(f+g)(ab) = f(1)$ for all $a,b \in D$ with $ab \ne 0,1$. Setting $b = 1$, we have $(f+g)(a) = f(1)$ for all $a \in D \setminus \{0,1\}$. By the additivity of $f+g$, we get $f=-g$.\vskip4pt

Thus $f(x)= - f(x^{-1})$ for all $x \in D^{\times}$. In particular, $f(1) = 0$.
Let $x\in D\setminus \{0, 1\}$.
Then
 \begin{eqnarray*}
 f(x(x+1))&=&- f((x+1)^{-1}x^{-1})\nonumber\\
                 &=&- f(x^{-1}-(x+1)^{-1})\nonumber\\
                 &=& f(x)-f(x+1)\nonumber\\
                 &=&0.
 \end{eqnarray*}
Hence $f(x^2+x)=0$ for all $x \in D$.
Let $A$ denote the kernel of $f$. Let $x, y\in D$. Clearly, we have $xy+yx\in A$. In particular, $2x^2\in A$ and hence $x^2\in A$ as $\text{\rm char}\,D\ne 2$.
Since $x^2+x\in A$, we conclude that $x\in A$. That is, $A=D$, i.e., $f=0$, as desired.
\hfill $\square$
\vskip6pt

\end{document}